\newcommand{\mycolor}{Navy}
\newtheorem{The}{Theorem}[section]
\newtheorem{Lem}[The]{Lemma}
\newtheorem{Prop}[The]{Proposition}
\newtheorem{Cor}[The]{Corollary}
\newtheorem{Rem}[The]{Remark}
\newtheorem{Def}[The]{Definition}
\newcommand{\C}{\mathbb{C}}
\newcommand{\R}{\mathbb{R}}
\newcommand{\N}{\mathbb{N}}
\newcommand{\dt}{\partial_t}
\newcommand{\MAu}{(dd^cu)^n}
\begin{document}
 \title[Parabolic Monge-Amp\`ere equations]{A comparison principle for
 	 parabolic complex Monge-Amp\`ere equations} 
\setcounter{tocdepth}{1}
\author{Hoang-Son Do} 
\address{Institute of Mathematics \\ Vietnam Academy of Science and Technology \\18
Hoang Quoc Viet \\Cau Giay, Hanoi, Vietnam}
\email{hoangson.do.vn@gmail.com, dhson@math.ac.vn}
\author{Thanh Cong Ngoc Pham}
\address{Department of Mathematics\\ VNU University of Science, 334 Nguyen Trai\\ Thanh Xuan, Ha Noi, VietNam}
\email{phamngocthanhcong1997@gmail.com }
\date{\today\\ {\it Keywords:} Viscosity solutions, Parabolic Monge-Amp\`ere equation, pluripotential theory.
\\ The first author was supported by Vietnam Academy of Science and Technology under grant number CT0000.07/21-22.}
\maketitle
\begin{center}
	{\it Dedicated to Professor Ahmed Zeriahi on the occasion of his retirement.}
\end{center}
\begin{abstract}
In this paper, we study the Cauchy-Dirichlet  problem for  Parabolic complex Monge-Amp\`ere equations  on  strongly pseudoconvex domains  using the viscosity method.  We prove a comparison principle for 
Parabolic complex Monge-Amp\`ere equations and use it to study 
the existence and uniqueness of viscosity solution in certain cases where the sets $\{z\in\Omega: f(t, z)=0 \}$
may be pairwise disjoint.
\end{abstract}
\tableofcontents
\section{Introduction}
In Algebraic Geometry, the Minimal Model Program is known as a process of simplifying algebraic varieties through algebraic surgeries in birational geometry. In \cite{ST,ST12},
 Song and Tian gave a conjectural picture to approach the Minimal Model Program via the K\"ahler-Ricci flow.
 This approach requires  a theory of weak solutions for certain  degenerate parabolic complex Monge-Amp\`ere equations.

A  viscosity approach for   parabolic Monge-Amp\`ere (PMA) equations has been developed  by  Eyssidieux-Guedj-Zeriahi both on domains \cite{EGZ15b} and on compact K\"ahler manifolds \cite{EGZ16, EGZ18} (see also \cite{DLT} and
 \cite{To21} for some generalizations). In another direction, a theory of  pluripotential solutions for PMA equations has been
 developed in \cite{GLZ1, GLZ2}. Under suitable conditions, the notions of these weak solutions
 are equivalent \cite{GLZ3}. Besides having applications for the Minimal Model Program, the theories of
 weak solutions for  PMA equations are interesting topics in themself. The aim of this paper is to study the theory of viscosity
 solutions for PMA equations in domains of $\C^n$.

 Let $\Omega\subset\C^n$ be a strongly pseudoconvex domain and let $T\in (0, \infty)$. We consider the following
  Cauchy-Dirichlet problem:
\begin{equation}\label{PMA}
\begin{cases}
\MAu=e^{\dt u+F(t, z, u)}\mu(t, z)\qquad\mbox{ in }\qquad \Omega_T,\\
u=\varphi\qquad\mbox{in}\qquad [0, T)\times\partial\Omega,\\
u(0, z)=u_0(z)\qquad\mbox{in}\quad\bar{\Omega},
\end{cases}
\end{equation}
where
\begin{itemize}
	\item  $\Omega_T=(0, T)\times\Omega$.
	\item $F(t, z, r)$ is continuous in $[0, T]\times\bar{\Omega}\times\R$ and non-decreasing in $r$.
	\item $\mu(t, z)=f(t, z)dV$,
	 where $dV$ is the standard volume form in $\C^n$ and $f\geq 0$ is a bounded  continuous function in $[0, T]\times\Omega$.
	 \item $\varphi(t, z)$ is a continuous function in $[0, T]\times\partial\Omega$.
	 \item $u_0(z)$ is continuous in $\bar{\Omega}$ and plurisubharmonic in $\Omega$ such that $u_0(z)=\varphi(0, z)$ in $\partial\Omega$.
\end{itemize}
In \cite{EGZ15b}, Eyssidieux-Guedj-Zeriahi proved that if $(u_0, \mu(0, z))$ is admissible
(see Definition \ref{def admissible}) and $F, f, \varphi$ do not depend on $t$ then \eqref{PMA} has a unique viscosity solution.
In \cite{DLT}, this result has been extended to the case where $F, f, \varphi$ depend on $t$ and $f$ satisfies
some additional conditions under which  $\{z\in\Omega: f(t, z)=0 \}\subset\{z\in\Omega: f(s, z)=0 \}$ for $0<s<t<T$
 (see \cite[Theorem 4.13]{DLT}). In the general case, with $f$ being merely a non-negative, bounded, continuous function, the
 question about the existence and uniqueness of viscosity solution to \eqref{PMA} is still open.
 
 In this paper, we prove a comparison principle for \eqref{PMA} and use it to study 
  the existence and uniqueness of viscosity solution to \eqref{PMA}  in certain cases where the sets $\{z\in\Omega: f(t, z)=0 \}$
 may be pairwise disjoint.
  Specifically, we assume that $\Phi: (-1, 1)\times\Omega\rightarrow\C^n$ is a continuous mapping satisfying the following conditions:
 \begin{itemize}
 	\item the mapping $z\mapsto\Phi(s, z)$ is holomorphic in $\Omega$ for every $s\in (-1, 1)$; 
 	\item $\Phi (0, z)=z$ for every $z\in\Omega$;
 	\item $\partial_t\Phi$ is well defined and continuous on $(-1, 1)\times\Omega$.
 	In particular, $\partial_t\Phi$ is holomorphic on $\{0\}\times\Omega$ and,
 	for each $U\Subset\Omega$, there exist $C_U>0$ and $\delta_U>0$ such that
 	$\Phi (s, z)\in\Omega$ and
 	\begin{equation}\label{eq Phi}
 	|\Phi (s, z)-z|\leq C_U|s|,
 	\end{equation}
 	 for all $(s, z)\in (-\delta_U, \delta_U)\times U$.
 \end{itemize}
 Our main result is as follows:
\begin{The}\label{Main Theorem} 
Suppose that the following conditions are satisfied
\begin{itemize}
	\item for every $0<R<S<T$ and $K\Subset\Omega$, there exist $a, b>0$ such that if $(t_0, z_0)\in (R, S)\times K$ and $f(t_0, z_0)=0$ then
	\begin{equation}\label{eq0.0 Main Theorem}
	f(t_0, z)\leq\exp\Big(b-a\dfrac{|\langle z-z_0, \partial_t\Phi (0, z_0)\rangle|}{|z-z_0|^2} \Big),
	\end{equation}
	for every $z\in\Omega\setminus\{z_0\}$;
	\item for every $0<R<S<T$, $K\Subset\Omega$ and $\epsilon>0$, there exists $0<\delta<\delta_K$ such that
	\begin{equation}\label{eq0 Main Theorem}
	(1+\epsilon)f(t, z)\geq f(t+s, \Phi(s, z)),
	\end{equation}
	for every $z\in K$, $R<t<S$ and $|s|<\delta$.
\end{itemize}
  Assume that $u$ and $v$, respectively, is a bounded viscosity subsolution and
a bounded viscosity supersolution to \eqref{PMA}. Then, for every $0<R<S<T$, $K\Subset\Omega$ and $\epsilon>0$,
 there exists
 $0<\delta<\delta_K$ such that 
\begin{center}
	$u(t+s_1, \Phi(s_1, z))<v(t+s_2, \Phi(s_2, z))+\epsilon,$
\end{center}
for all $z\in K$, $R<t<S$ and $\max\{|s_1|, |s_2|\}<\delta$.
\end{The}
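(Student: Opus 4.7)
The plan is to argue by contradiction using a doubling-of-variables technique tailored to the flow $\Phi$. Suppose the conclusion fails: there exist $0<R<S<T$, $K\Subset\Omega$, $\varepsilon>0$ and sequences $(t_n, z_n) \in [R, S] \times K$ and $s_1^n, s_2^n \to 0$ with
\begin{equation*}
u(t_n + s_1^n, \Phi(s_1^n, z_n)) \geq v(t_n + s_2^n, \Phi(s_2^n, z_n)) + \varepsilon.
\end{equation*}
Extracting a convergent subsequence produces a ``bad point'' in the interior at which the comparison fails along the $\Phi$-flow.

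The core of the argument is to introduce, on a slightly enlarged parabolic compact set, an auxiliary functional of the form
\begin{equation*}
\Psi_\eta(t, z, s_1, s_2) = u(t + s_1, \Phi(s_1, z)) - v(t + s_2, \Phi(s_2, z)) - \frac{s_1^2 + s_2^2}{2\eta} - \chi(t, z),
\end{equation*}
where $\chi$ is a small plurisubharmonic cut-off forcing the maximizer into the interior of $(R', S') \times K' \times (-\delta_{K'}, \delta_{K'})^2$ for some $R < R' < S' < S$ and $K \Subset K'$. Upper semicontinuity of $u$ and lower semicontinuity of $v$ ensure that $\Psi_\eta$ attains its maximum at some $(t^\sharp, z^\sharp, s_1^\sharp, s_2^\sharp)$, with value at least $\varepsilon/2$ once $\eta$ is sufficiently small. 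Applying the parabolic Jensen--Ishii lemma at this maximizer yields compatible test functions for $u$ at $(t^\sharp + s_1^\sharp, \Phi(s_1^\sharp, z^\sharp))$ and for $v$ at $(t^\sharp + s_2^\sharp, \Phi(s_2^\sharp, z^\sharp))$, with matched control on the time-derivatives and complex Hessians.

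Plugging these test functions into the subsolution inequality for $u$ and the supersolution inequality for $v$, and using the monotonicity of $F$ in its last argument, reduces the contradiction to a comparison between $f$ at the two nearby flow-shifted base points. In the non-degenerate regime, where $f$ stays bounded away from zero in a neighbourhood of $(t^\sharp, z^\sharp)$, hypothesis \eqref{eq0 Main Theorem} bounds the relevant ratio by $1+\varepsilon$ for $|s_i|$ small, and the classical comparison argument closes the contradiction after a careful tuning of $\eta$ and $\chi$.

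The main obstacle is the degenerate regime in which the maximizer approaches a zero $(t_0, z_0)$ of $f$. Here hypothesis \eqref{eq0.0 Main Theorem} is essential: the pointwise bound forces $f(t_0, \cdot)$ to decay super-exponentially in every direction not orthogonal to $\partial_t\Phi(0, z_0)$. Since $\Phi(s, z) - z = s\,\partial_t\Phi(0, z) + o(s)$ by \eqref{eq Phi}, the displacement produced by the flow is, up to lower-order terms, exactly along this preferred direction, so $f$ decays quantitatively between the two points appearing in the doubling. The technical heart of the proof will be to absorb this degeneracy by adding to $\chi$ a term whose complex Hessian encodes the weight appearing in \eqref{eq0.0 Main Theorem}, and to match the decay rate $a$ with the scaling parameter $\eta$ so that the resulting error in the viscosity inequalities is strictly less than $\varepsilon/2$.
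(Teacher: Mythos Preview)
Your functional $\Psi_\eta(t,z,s_1,s_2)$ does not double the spatial variable: both $u$ and $v$ are evaluated at points that depend on the \emph{same} $z$. At a maximizer, the map $z\mapsto u(t^\sharp+s_1^\sharp,\Phi(s_1^\sharp,z))-v(t^\sharp+s_2^\sharp,\Phi(s_2^\sharp,z))$ has a local maximum, but since $u$ is only upper semicontinuous and $v$ only lower semicontinuous, this gives no test function for either one separately. The parabolic Jensen--Ishii lemma (Theorem~\ref{the maximal}) requires the structure $u(t,\xi)-v(t,\eta)-\phi(t,\xi,\eta)$ with $\xi$ and $\eta$ independent; your penalisation in $(s_1,s_2)$ only traces a $(2n+3)$-dimensional constrained slice of $\Omega_T\times\Omega_T$ and does not provide the required separation. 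So the step ``applying the parabolic Jensen--Ishii lemma \dots yields compatible test functions'' is unjustified as written.

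The paper handles this by inserting an intermediate regularisation step: it forms the sup/inf convolutions $u^k$, $v_k$ along the flow $\Phi$ (Lemmas~\ref{lem regularization sub}, \ref{lem regularization super}), which are Lipschitz along flow curves in the sense of \eqref{eq0 lem compa}. This Lipschitz property has two effects. First, via Lemma~\ref{lem 4 weak compa} it verifies the technical hypothesis \eqref{Cond}, without which Jensen--Ishii cannot even be invoked. Second, it yields the quantitative bound $|\tau-\langle p,\partial_t\Phi(0,z_0)\rangle|\le k$, which is precisely what links the time-jet $\tau_{N2}$ to the spatial jet $p_{N2}=-N(\eta_N-z_0)$ in the doubling $h_N(t,\xi,\eta)$ of Lemma~\ref{lem compa}; this link, combined with \eqref{eq0.0 Main Theorem}, is what kills the degenerate case $f(t_0,z_0)=0$. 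Your sketch of the degenerate regime (``add to $\chi$ a term whose complex Hessian encodes the weight'') does not supply any such mechanism, and without the flow-Lipschitz regularisation there is no control on $\tau$ in terms of $p$ to exploit. You need either to introduce a genuine spatial doubling, or to first regularise $u$ and $v$ along $\Phi$ as the paper does.
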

It is easy to see that if $f$ does not depend on $t$ then $f$ satisfies
\eqref{eq0.0 Main Theorem} and \eqref{eq0 Main Theorem} with $\Phi(s, z)=z$.
 Some other simple examples
are $(f, \Phi)=(g(tz_0+z), -sz_0+z)$ and $(f, \Phi)=(g(e^{it}z), e^{-is}z)$, where 
$z_0\in \C^n$ and $g(z)=e^{-1/(h(z))^2}$ for some Lipschitz function $h$.
 If $f_1, f_2$ satisfy \eqref{eq0.0 Main Theorem} and \eqref{eq0 Main Theorem} for
the same $\Phi$ then $tf_1+(T-t)f_2$ satisfies \eqref{eq0.0 Main Theorem} and \eqref{eq0 Main Theorem}.

We expect that Theorem \ref{Main Theorem} still holds without the condition \eqref{eq0.0 Main Theorem}. We need this condition for some estimates in the proof of Lemma \ref{lem compa}.

 By using Theorem \ref{Main Theorem}, we obtain the
following result:
\begin{Cor}\label{Main Cor}
	Assume that $(u_0, \mu(0, z))$ is admissible
	(see Definition \ref{def admissible}). Suppose that $\Phi$ and $f$
	satisfy the conditions in Theorem \ref{Main Theorem}. Then \eqref{PMA} has a unique viscosity solution.
\end{Cor}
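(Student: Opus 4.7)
The uniqueness part of the corollary is almost immediate from Theorem \ref{Main Theorem}. Given two bounded viscosity solutions $u_1$ and $u_2$ of \eqref{PMA}, I would apply the theorem to the pair $(u,v)=(u_1,u_2)$ with the choice $s_1=s_2=0$. For every $0<R<S<T$, $K\Subset\Omega$ and $\epsilon>0$ this produces the estimate $u_1(t,z)<u_2(t,z)+\epsilon$ on $(R,S)\times K$. Letting $\epsilon\downarrow 0$ and exhausting $\Omega_T$ by such parabolic boxes yields $u_1\leq u_2$ in $\Omega_T$, and exchanging the roles of $u_1$ and $u_2$ gives the reverse inequality. Since both solutions are continuous up to the parabolic boundary and share the prescribed Cauchy-Dirichlet data on $\{0\}\times\bar{\Omega}\cup[0,T)\times\partial\Omega$, equality extends to the closure.

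For existence I would adapt the Perron method developed in \cite{EGZ15b,DLT}. The first task is to build a bounded viscosity subsolution $\underline{U}$ and a bounded viscosity supersolution $\bar{U}$ of \eqref{PMA} that match the initial and boundary data. The admissibility of $(u_0,\mu(0,z))$ together with strong pseudoconvexity of $\Omega$ furnishes a continuous plurisubharmonic $\rho$ on $\bar{\Omega}$ with $(dd^c\rho)^n\geq\mu(0,z)$ and $\rho=u_0$; suitable affine-in-$t$ modifications of $\rho$ and of a maximal plurisubharmonic extension of $\varphi$ give $\underline{U}$ and $\bar{U}$ in the standard way. Define the Perron envelope
\begin{equation*}
U(t,z):=\sup\bigl\{w(t,z) : w \text{ is a bounded viscosity subsolution of }\eqref{PMA},\ \underline{U}\leq w\leq\bar{U}\bigr\}.
\end{equation*}

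By now-routine viscosity arguments (closedness of the subsolution class under suprema, upper-semicontinuous regularization, and a bump perturbation to kill local maxima), the upper semicontinuous envelope $U^*$ is a viscosity subsolution of \eqref{PMA} and the lower semicontinuous envelope $U_*$ is a viscosity supersolution, both sandwiched between $\underline{U}$ and $\bar{U}$.

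The key step, and the sole place where the new comparison principle is essential, is to prove $U^*\leq U_*$. Applying Theorem \ref{Main Theorem} to $(u,v)=(U^*,U_*)$ with $s_1=s_2=0$ gives $U^*\leq U_*+\epsilon$ on each $(R,S)\times K$, so $U^*\leq U_*$ on $\Omega_T$. Combined with the trivial inequality $U_*\leq U^*$, this forces $U^*=U_*=U$, so $U$ is a continuous viscosity solution of the PDE. The sandwich $\underline{U}\leq U\leq \bar{U}$ on the parabolic boundary then enforces the Cauchy-Dirichlet data, completing the existence proof. The main obstacle in this program is the construction of $\underline{U}$ and $\bar{U}$ with sharp enough boundary behavior, since Theorem \ref{Main Theorem} only gives interior comparison and hence cannot, by itself, control values on $\{0\}\times\bar{\Omega}$ or on $[0,T)\times\partial\Omega$.
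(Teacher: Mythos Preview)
Your outline is the paper's proof: build $\epsilon$-sub/super-barriers via Propositions~\ref{prop.subbarrier} and~\ref{prop.superbarrier}, feed them into the Perron envelope (Lemma~\ref{lem perron}) to obtain a discontinuous solution with $U^*$ a subsolution and $U_*$ a supersolution of \eqref{PMA}, and then apply Theorem~\ref{Main Theorem} to the pair $(U^*,U_*)$ to force $U^*=U_*$; uniqueness goes the same way. Your choice $s_1=s_2=0$ is actually more direct than the paper's detour through limits along the curves $s\mapsto(t+s,\Phi(s,z))$.

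One point to correct in your barrier sketch: you have the role of admissibility inverted. Definition~\ref{def admissible} provides, for each $\epsilon>0$, a continuous $u_\epsilon$ with $u_0\le u_\epsilon\le u_0+\epsilon$ and $(dd^cu_\epsilon)^n\le e^{C_\epsilon}\mu(0,z)$; this is the input for the \emph{super}-barrier (Proposition~\ref{prop.superbarrier}), not a psh $\rho$ with $(dd^c\rho)^n\ge\mu(0,z)$ and $\rho=u_0$. The sub-barrier (Proposition~\ref{prop.subbarrier}) needs no admissibility and comes from strong pseudoconvexity alone. Consequently one does not obtain a single exact pair $(\underline U,\bar U)$ matching the data, but rather $\epsilon$-barriers for every $\epsilon>0$; this is exactly what Lemma~\ref{lem perron} requires and what guarantees that $U^*$ and $U_*$ satisfy the Cauchy--Dirichlet conditions needed to invoke Theorem~\ref{Main Theorem}.
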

\section{Preliminaries}
In this section, we recall some basic concepts and well-known results about viscosity sub/super-solutions. The reader can find more
details in \cite{Jen88}, \cite{Ish89}, \cite{IL90}, \cite{CIL92}, \cite{EGZ} and \cite{DLT}.
\begin{Def} (Test functions)
	Let  $ w : \Omega_T \longrightarrow \R$ be any function defined in $\Omega_T$ and $(t_0,z_0) \in \Omega_T$ a given point. 
	An upper test function (resp. a lower test function) for $w$ at the point $(t_0,z_0)$ 
	is  a $C^{(1,2)}$-smooth function $q$ (i.e., $\partial_tq$, $D_zq$ and $D^2_zq$ are continuous on
	on the domain of $q$)
	in a neighbourhood of the point $(t_0,z_0)$ such that $  w (t_0,z_0) = q (t_0,z_0)$ and $w \leq q$ (resp. $w \geq q$) in a neighbourhood of $(t_0,z_0)$.
\end{Def}
\begin{Def}
	1. A  function  $u\in USC(\Omega_T)$ is said to be a (viscosity) subsolution to the parabolic complex Monge-Amp\`ere equation 
	\begin{equation}\label{PMAfree}
	\MAu=e^{\dt u+F(t, z, u)}\mu(t, z),
	\end{equation}
	 in $\Omega_T$ if for any point $ (t_0,z_0) \in \Omega_T$ and any upper test function $q$ for $u$ at $(t_0,z_0)$, we have
	$$
	(dd^c q_{t_0} (z_0))^n  \geq e^{\partial_t q (t_0,z_0) + F (t_0,z_0,q (t_0,z_0))} \mu (t_0, z_0).
	$$
	In this case, we also say that $u$ satisfies the differential inequality 
	$$(dd^c u)^n \geq e^{\partial_t u (t,z) + F (t,z,u (t,z))} \mu(t, z),$$ in the viscosity sense in $\Omega_T.$
	
	A function $u\in USC([0, T)\times\overline{\Omega})$ is called  a subsolution to  the Cauchy-Dirichlet problem \eqref{PMA} if $u$ is a subsolution to \eqref{PMAfree} 
	satisfying $u\leq\varphi$ in $[0, T)\times\partial\Omega$ and
	$u(0, z)\leq u_0(z)$ for all $z\in\Omega$.
	
	\medskip
	2. A  function  $ v\in\Omega_T$ is said to be a (viscosity) 
	supersolution to the parabolic complex Monge-Amp\`ere equation \eqref{PMAfree}
	in $\Omega_T$ if for any point 
	$ (t_0,z_0) \in \Omega_T$ and any lower test  function $q$ for $v$ at $(t_0,z_0)$ such that $dd^c q_{t_0}(z_0) \geq 0$, we have 
	$$
	(dd^c q_{t_0})^n (z_0) \leq e^{\partial_{t} q (t_0,z_0) + F (t_0,z_0,q (t_0,z_0))} \mu (t_0, z_0).
	$$
	In this case we also say that $v$ satisfies the differential inequality 
	$$(dd^c v)^n \leq e^{\partial_t v (t,z) + F (t,z,v(t,z))} \mu(t, z),$$
	 in the viscosity sense in $\Omega_T$.
	
	A function $v\in LSC([0, T)\times\overline{\Omega})$ is called  a supersolution to \eqref{PMA} if $v$ is a supersolution to \eqref{PMAfree}
	satisfying $v\geq\varphi$ in $[0, T)\times\partial\Omega$ and
	$v(0, z)\geq u_0(z)$ for all $z\in\Omega$.
	
	\medskip
	3. A function $u$ is said to be a (viscosity) solution to \eqref{PMAfree}
	(respectively, \eqref{PMA}) if it is a subsolution and a supersolution to
 \eqref{PMAfree} (respectively, \eqref{PMA}). 
\end{Def}
\begin{Rem}\label{rem subsolution}
	a) By the same argument as in the proof of \cite[Proposition 1.3]{EGZ}, if $u$ is a subsolution 
	to \eqref{PMAfree}
	and  $q$ is an upper test function for $u$ at $(t_0,z_0)\in\Omega_T$ then $dd^c q_{t_0} (z_0)\geq 0$;\\
	b) If  $u$ is a subsolution 
	to \eqref{PMAfree} then $u(t, z)$ is plurisubharmonic in $z$ for every $t\in (0, T)$ (see \cite[Corollary 3.7]{EGZ15b}).
\end{Rem}
 Denote by $\mathcal{S}_{2n}$ the space of all $2n\times 2n$ symmetric matrices.
 For each function  $ u : \Omega_T \longrightarrow \R$ and for every $(t_0,z_0) \in \Omega_T$, 
we define by 	$\mathcal P^{2,+} u (t_0,z_0)$ the set of $(\tau, p, Q)\in\R\times \R^{2n}\times\mathcal{S}_{2n}$ satisfying
 \begin{equation} 
 	u (t,z)  \leq u (t_0,z_0) +  \tau (t-t_0) + o (\vert t-t_0\vert) 
  + \langle p, z - z_0\rangle + \frac{1}{2} \langle Q (z - z_0), z-z_0\rangle   
 	+ o (\vert z - z_0\vert^2),
 \end{equation}
and denote by $\bar{\mathcal P}^{2,+} u (t_0,z_0)$ the set of 
$(\tau, p, Q)\in\R\times \R^{2n}\times\mathcal{S}_{2n}$ satisfying: $\exists (t_m, z_m)\rightarrow (t_0, z_0)$
	and $(\tau_m, p_m, Q_m)\in \mathcal P^{2,+} u (t_0,z_0)$ such that $(\tau_m, p_m, Q_m)\rightarrow (\tau, p, Q)$ and
	$u(t_m, z_m)\rightarrow u(t_0, z_0)  \}.$\\

	We define in the same way  the  sets  $\mathcal P^{2,-} u (t_0,z_0)$ and $\bar{\mathcal P}^{2,-} u (t_0,z_0)$
 by 
	$$
	\mathcal P^{2,-} u (t_0,z_0) = - \mathcal P^{2,+} (-u) (t_0,z_0),
	$$  
	and
	$$
	\bar{\mathcal P}^{2,-} u (t_0,z_0)=-\bar{\mathcal P}^{2,+}(-u)(t_0,z_0).
	$$ 
	Since $F$ and $f$ are continuous, by \cite[Proposition 2.6]{EGZ}, we have: 
\begin{Prop} \label{prop def vis}
	\text{ }
	
	1. An upper semi-continuous function $u : \Omega_T \longrightarrow \R$ is a subsolution to the parabolic equation 
		\begin{equation}\label{PMA prop P}
	\MAu=e^{\dt u+F(t, z, u)}\mu(t, z),
	\end{equation}
	 if and only if for all
	$ (t_0,z_0) \in \Omega_T$ and  $(\tau,p,Q) \in 	\bar{\mathcal P}^{2,+} u (t_0,z_0),$  we have
	$dd^cQ\geq 0$ and
	\begin{equation} \label{eq:sub}
(dd^c Q)^n\geq	e^ {\tau +  F (t_0,z_0, u (t_0,z_0))} \mu (t_0,z_0).
	\end{equation}
	Here $dd^cQ:=(dd^c \langle Q z, z\rangle)$, $z\in\C^n=\R^{2n}$.\\
	2. A lower semi-continuous function $v : \Omega_T \longrightarrow \R$  is 
	a supersolution to the parabolic equation \eqref{PMA prop P}  if and only if  for all
	$ (t_0,z_0) \in \Omega_T$ and  $(\tau,p,Q) \in 	\bar{\mathcal P}^{2,-} u (t_0,z_0)$ such that $dd^c Q \geq 0,$ we have
	\begin{equation} \label{eq:super}
(dd^c Q)^n\leq	e^ {\tau +  F (t_0, z_0,v (t_0,z_0))} \mu (t_0,z_0).
	\end{equation}
\end{Prop}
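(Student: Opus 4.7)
My plan is to prove Part 1 (for subsolutions); Part 2 (supersolutions) follows by the duality $v \mapsto -v$ and the identity $\mathcal{P}^{2,-}v(t_0, z_0) = -\mathcal{P}^{2,+}(-v)(t_0, z_0)$ (and likewise for closures).

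For the easy direction (semijet inequality $\Rightarrow$ test-function condition): given any upper test function $q$ for $u$ at $(t_0, z_0)$, a Taylor expansion of $q$ combined with $u \leq q$ and $u(t_0, z_0) = q(t_0, z_0)$ shows that the triple $(\partial_t q(t_0, z_0), D_z q(t_0, z_0), D_z^2 q(t_0, z_0))$ lies in $\mathcal{P}^{2,+} u(t_0, z_0) \subset \bar{\mathcal{P}}^{2,+} u(t_0, z_0)$. Applying the semijet hypothesis to this triple yields the required differential inequality at $q$.

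For the reverse direction, I fix $(\tau, p, Q) \in \mathcal{P}^{2,+} u(t_0, z_0)$. By definition there is a nondecreasing continuous modulus $\omega$ with $\omega(0) = 0$ such that $u(t, z) - u(t_0, z_0) - \tau(t-t_0) - \langle p, z-z_0\rangle - \tfrac{1}{2}\langle Q(z-z_0), z-z_0\rangle \leq \omega(|t-t_0|)|t-t_0| + \omega(|z-z_0|^2)|z-z_0|^2$ in a neighborhood of $(t_0, z_0)$. For each $\epsilon > 0$, I would construct an upper test function $q_\epsilon(t, z) := u(t_0, z_0) + \tau(t-t_0) + G_\epsilon(t) + \langle p, z-z_0\rangle + \tfrac{1}{2}\langle (Q + 2\epsilon I)(z-z_0), z-z_0\rangle$, where $G_\epsilon$ is $C^1$ at $t_0$ with $G_\epsilon(t_0) = G_\epsilon'(t_0) = 0$ and $G_\epsilon(t) \geq \omega(|t-t_0|)|t-t_0|$ locally; the spatial error $\omega(|z-z_0|^2)|z-z_0|^2$ is absorbed by $\epsilon|z-z_0|^2$ on a small enough neighborhood. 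Since $\partial_t q_\epsilon(t_0, z_0) = \tau$ and $D_z^2 q_\epsilon(t_0, z_0) = Q + 2\epsilon I$, the test-function subsolution condition yields $(dd^c(Q + 2\epsilon I))^n \geq e^{\tau + F(t_0, z_0, u(t_0, z_0))}\mu(t_0, z_0)$, and letting $\epsilon \to 0$ gives the desired semijet inequality by continuity of the Monge-Amp\`ere operator. The positivity $dd^c Q \geq 0$ is obtained in the limit from $dd^c(Q + 2\epsilon I) \geq 0$, provided by Remark \ref{rem subsolution}(a). The extension to $\bar{\mathcal{P}}^{2,+} u(t_0, z_0)$ is by approximation: a given element in the closure is a limit of $(\tau_m, p_m, Q_m) \in \mathcal{P}^{2,+} u(t_m, z_m)$ with $u(t_m, z_m) \to u(t_0, z_0)$, and the semijet inequality passes to the limit using continuity of $F$ on $[0,T] \times \bar\Omega \times \R$ and of $f$ on $[0, T] \times \Omega$.

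The main obstacle is the construction of $G_\epsilon$: finding a function that is $C^1$ at $t_0$ with vanishing first derivative there and that dominates the possibly irregular modulus $\omega(|t-t_0|)|t-t_0|$ near $t_0$. Because $\omega(\alpha)\alpha = o(\alpha)$ with $\omega$ nondecreasing, one may build $G_\epsilon$ by integrating a suitable continuous majorant of $\omega$; this is a standard technical step in viscosity theory, cf.\ \cite{CIL92}, \cite{EGZ}. Once this is in hand, the rest of the argument is routine passage to the limit using continuity of the ingredients in \eqref{PMAfree}.
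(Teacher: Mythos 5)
The paper does not actually prove this proposition: it is quoted directly from \cite[Proposition 2.6]{EGZ} (continuity of $F$ and $f$ being exactly what is needed to pass to the closure $\bar{\mathcal P}^{2,+}$), and your sketch reproduces the standard argument used there — the easy direction via Taylor expansion of a test function, the converse via the $q_\epsilon$ construction with the $C^1$ majorant $G_\epsilon$ of the time modulus and the $\epsilon|z-z_0|^2$ correction, then closure by continuity. Your outline is correct and is essentially the same approach; just make sure in Part 2 that you rerun the argument with lower test functions and carry the constraint $dd^cQ\ge 0$ through (the map $v\mapsto -v$ only transports the semijets, not the Monge--Amp\`ere inequality itself), and that the deferred construction of $G_\epsilon$ is actually written out or properly cited.
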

The following theorem is the parabolic Jensen-Ishii’s maximum principle which plays an important role in the theory of viscosity solution:
\begin{The}\cite[Theorem 8.3]{CIL92}\label{the maximal}
	Let  $u\in USC(\Omega_T)$ and $v\in LSC(\Omega_T)$.
	Let $\phi$ be a function defined in $(0, T) \times \Omega^2$ such that $(t, \xi ,\eta) \longmapsto \phi (t,\xi, \eta)$ is continuously differentiable in $t$ and twice continuously differentiable in $(\xi ,\eta)$.  
	
	Assume that the function $(t,\xi, \eta) \longmapsto u (t,\xi) - v(t,\eta) - \phi (t,\xi, \eta)$ has a local maximum at some point $(\hat t, \hat\xi, \hat\eta) \in (0, T) \times \Omega^2$. 
	
	Assume furthermore that both $w=u$ and $w=-v$ satisfy:
	\begin{displaymath}
	(\label{Cond}\ref{Cond})\left\{
	\begin{array}{ll}
	\forall (s,z) \in \Omega & \exists r>0 \ \text{such that} \ \forall M >0 \  \exists C \ \text{satisfying} \\
	&\left.
	\begin{array}{l}
	|(t, \xi)-(s,z)| \le r,\\
	(\tau,p,Q)\in \mathcal{P}^{2,+}w(t, \xi) \\
	|w(t, \xi)|+|p| + |Q| \le M
	\end{array} \right\} \Longrightarrow \tau\le C.
	\end{array}\right.
	\end{displaymath}
	
	Then for any $\kappa > 0$, there exists 
	$(\tau_1,p_1,Q^+) \in \bar{\mathcal P}^{2,+} u (\hat t, \hat \xi)$, 
	$(\tau_2,p_2,Q^-) \in \bar{\mathcal P}^{2,-} v (\hat t, \hat \eta)$ such that
	$$\tau_1 = \tau_2 + D_t \phi (\hat t, \hat \xi,\hat \eta), \ p_1 = D_{\xi} \phi (\hat t, \hat\xi,\hat\eta),
	 \ p_2 = - D_{\eta} \phi (\hat t, \hat\xi,\hat\eta)$$ and
	$$
	-\left(\frac{1}{\kappa} + \| A \| \right) I \leq 
	\left(
	\begin{array}{cc}
	Q^+ &0 \\
	0 & - Q^-
	\end{array}
	\right) \leq  A  + \kappa A^2,
	$$
 where $A := D_{\xi, \eta}^2 \phi (\hat t, \hat\xi,\hat\eta)\in\mathcal{S}_{4n}$. 
\end{The}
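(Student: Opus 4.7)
The plan is to argue by contradiction and reduce the uniform statement to the classical pointwise comparison $u\le v$, which is then established by the Crandall-Ishii doubling-of-variables technique using Theorem~\ref{the maximal}. The hypotheses \eqref{eq0.0 Main Theorem} and \eqref{eq0 Main Theorem} are invoked only to rescue the doubling argument on the degenerate locus $\{f=0\}$; this is presumably the content of Lemma~\ref{lem compa}, which the authors flag as the source of their technical assumption.

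I first reduce the uniform conclusion to an ordinary one. Suppose it fails: for some $\epsilon_0>0$, $R_0<S_0<T$, $K_0\Subset\Omega$ there exist $\delta_k\to 0^+$, $t_k\in(R_0,S_0)$, $z_k\in K_0$, and $|s_1^k|,|s_2^k|<\delta_k$ with $u(t_k+s_1^k,\Phi(s_1^k,z_k))\ge v(t_k+s_2^k,\Phi(s_2^k,z_k))+\epsilon_0$. Extracting a subsequence with $(t_k,z_k)\to(t_\ast,z_\ast)\in[R_0,S_0]\times K_0$, using $\Phi(0,\cdot)=\mathrm{id}$, the continuity of $\Phi$, the USC of $u$, and the LSC of $v$ yields $u(t_\ast,z_\ast)\ge v(t_\ast,z_\ast)+\epsilon_0$ with $(t_\ast,z_\ast)\in(0,T)\times\Omega$. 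Hence it suffices to prove the classical comparison $u\le v$ on $(0,T)\times\Omega$, the parabolic boundary condition $u\le\varphi\le v$ being built into the definition of sub/supersolution to \eqref{PMA}.

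For the classical comparison I would replace $v$ by the $\Phi$-shift $v_\sigma(t,z):=v(t+\sigma,\Phi(\sigma,z))$, which for each small $\sigma>0$ is a viscosity supersolution of a PMA equation with modified density $|\det D_z\Phi(\sigma,\cdot)|^2\,f(t+\sigma,\Phi(\sigma,\cdot))$ and drift $F(t+\sigma,\Phi(\sigma,\cdot),\cdot)$, thanks to the holomorphy of $\Phi(\sigma,\cdot)$; by \eqref{eq0 Main Theorem} this modified density is bounded by $(1+\epsilon)f(t,\cdot)$ for $\sigma$ small, which is what will allow the degenerate estimate to close. Assuming for contradiction that $u\not\le v_\sigma+\gamma$ for some $\gamma>0$ and all small $\sigma>0$, I maximize
$$\Psi_\alpha(t,\xi,\eta)=u(t,\xi)-v_\sigma(t,\eta)-\alpha|\xi-\eta|^2-\chi(\xi)-\chi(\eta)-\frac{\beta}{T-t}$$
over $(0,T)\times\Omega^2$, where $\chi$ is a boundary-blowing-up plurisubharmonic barrier and $\beta>0$ is small; standard estimates locate a maximum $(\hat t,\hat\xi,\hat\eta)$ in the interior with $\alpha|\hat\xi-\hat\eta|^2\to 0$, and Theorem~\ref{the maximal} produces matched parabolic jets with $\tau_1-\tau_2=\beta(T-\hat t)^{-2}>0$ together with the usual matrix comparison.

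Inserting these jets into the viscosity inequalities of Proposition~\ref{prop def vis}, using the monotonicity of $F$ in $r$ and the strict gap $\tau_1-\tau_2>0$, and taking $n$-th roots, one arrives at an estimate of the schematic form $f(\hat t,\hat\xi)\le e^{-c\beta}(1+o_\alpha(1))\,f(\hat t+\sigma,\Phi(\sigma,\hat\eta))$ for some $c>0$. On the set $\{f>0\}$ this contradicts continuity of $f$, the convergence $\hat\xi-\hat\eta\to 0$, and \eqref{eq0 Main Theorem} applied at $(\hat t,\hat\eta)$. The main obstacle is the degenerate case in which $f(\hat t,\hat\xi)\to 0$: here \eqref{eq0.0 Main Theorem} supplies the super-exponential upper bound $f(\hat t,z)\le\exp(b-a|\langle z-\hat\xi,\partial_t\Phi(0,\hat\xi)\rangle|/|z-\hat\xi|^2)$ for $z$ near the zero $\hat\xi$ of $f(\hat t,\cdot)$, which decays in every direction having a nonzero component along $\partial_t\Phi(0,\hat\xi)$. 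The delicate point is to exploit the jet inequalities to show that $\hat\xi-\hat\eta$ must have a nontrivial component in that distinguished direction at scale $\alpha^{-1/2}$, and then to balance the resulting exponential decay against that scale to close the contradiction; this matching of the penalty scale with the zero-set decay rate of $f$ is the step I expect to be the main technical obstacle.
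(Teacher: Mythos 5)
Your proposal does not prove the statement in question. The statement is Theorem \ref{the maximal}, the parabolic Jensen--Ishii maximum principle quoted from Crandall--Ishii--Lions \cite[Theorem 8.3]{CIL92}: a general assertion about an arbitrary $u\in USC(\Omega_T)$, $v\in LSC(\Omega_T)$ and a penalization $\phi$, producing matched parabolic jets $(\tau_1,p_1,Q^+)\in\bar{\mathcal P}^{2,+}u(\hat t,\hat\xi)$ and $(\tau_2,p_2,Q^-)\in\bar{\mathcal P}^{2,-}v(\hat t,\hat\eta)$ with $\tau_1=\tau_2+D_t\phi$ and the two-sided matrix bound in terms of $A=D^2_{\xi,\eta}\phi$. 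What you have written is instead a strategy for the paper's Theorem \ref{Main Theorem} (the comparison principle for the parabolic Monge--Amp\`ere flow): you discuss the hypotheses \eqref{eq0.0 Main Theorem} and \eqref{eq0 Main Theorem} on $f$ and $\Phi$, the reduction of the uniform statement to a pointwise comparison, and the treatment of the degenerate locus $\{f=0\}$. None of this is relevant to Theorem \ref{the maximal}, which has no Monge--Amp\`ere content whatsoever. Moreover, your argument explicitly \emph{invokes} Theorem \ref{the maximal} ("Theorem \ref{the maximal} produces matched parabolic jets"), so read as a proof of that theorem it is circular.

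A genuine proof of Theorem \ref{the maximal} proceeds along entirely different lines: one regularizes $u$ and $-v$ by sup-convolution in the space variables, applies Jensen's lemma for semiconvex functions (resting on Alexandrov's a.e.\ twice-differentiability) together with the elliptic theorem on sums to obtain second-order jets satisfying $-\bigl(\kappa^{-1}+\|A\|\bigr)I\le\operatorname{diag}(Q^+,-Q^-)\le A+\kappa A^2$, and then uses the hypothesis (\ref{Cond}) --- a uniform upper bound on the time components $\tau$ of the jets --- to control the time derivatives in the limit and to split $\tau_1-\tau_2=D_t\phi(\hat t,\hat\xi,\hat\eta)$ between the two closures $\bar{\mathcal P}^{2,+}u$ and $\bar{\mathcal P}^{2,-}v$. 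None of these ingredients (sup-convolution, Jensen--Alexandrov, the theorem on sums, the role of (\ref{Cond})) appears in your proposal. If your intention was to prove Theorem \ref{Main Theorem}, you should also be aware that the paper's route differs from yours: rather than doubling variables directly on $u$ and $v$, it first builds $\Phi$-twisted Lipschitz-in-time regularizations $u^k$, $v_k$ (Lemmas \ref{lem regularization sub} and \ref{lem regularization super}) and then applies the comparison Lemma \ref{lem compa}, in which the estimates near $\{f=0\}$ are closed using \eqref{eq0.0 Main Theorem} together with the jet identity $\tau_{N2}=O(1)-N\langle\eta_N-z_N,\partial_t\Phi(0,\eta_N)\rangle$ from Lemma \ref{lem 4 weak compa}.
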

The following lemma is deduced by combining Proposition \ref{prop def vis} and Theorem \ref{the maximal}:
\begin{Lem}\label{lem inf sup}
Let $(u_{\tau})$ be a   locally uniformly bounded family of real valued functions defined in $\Omega_T$. 

1. Assume that for every $\tau$, $u_{\tau}$ is a viscosity subsolution to the equation
\begin{equation}\label{eq Lem liminf limsup}
(dd^c w)^n=e^{ \partial_t w + F(t,z,w)} \mu(t, z) ,
\end{equation}
 in $\Omega_T$.
Then  
$\overline u= (\sup_{\tau} u_{\tau})^*$
 is a subsolution to \eqref{eq Lem liminf limsup}
in $\Omega_T$. Here $(\sup_{\tau} u_{\tau})^*$ is the upper semicontinuous regularization
of $\sup_{\tau} u_{\tau}$.

2. Assume that for every $\tau$, $u_{\tau}$ is a viscosity supersolution to \eqref{eq Lem liminf limsup}. Then 
$\underline u = (\inf_{\tau} u_{\tau})_*$
 is a supersolution to \eqref{eq Lem liminf limsup} in $\Omega_T$.
 Here $(\inf_{\tau} u_{\tau})_*$ is the lower semicontinuous regularization
 of $\inf_{\tau} u_{\tau}$.

3. If $\tau\in\N$ then 1. and 2. hold for 	$\overline u= (\limsup\limits_{\tau\to\infty} u_{\tau})^*$
and $\underline u = (\liminf\limits_{\tau\to\infty} u_{\tau})_*$.
\end{Lem}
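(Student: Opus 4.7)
The plan is to reduce everything to the jet reformulation of Proposition~\ref{prop def vis} combined with the standard parabolic stability of second-order semi-jets under upper/lower envelopes (the parabolic analogue of \cite[Lemma~4.2]{CIL92}). Recall that this stability asserts: if $\overline u=(\sup_\tau u_\tau)^*$ and $(\sigma,p,Q)\in\bar{\mathcal P}^{2,+}\overline u(t_0,z_0)$, then there exist indices $\tau_k$, points $(t_k,z_k)\to(t_0,z_0)$ and jets $(\sigma_k,p_k,Q_k)\in\bar{\mathcal P}^{2,+}u_{\tau_k}(t_k,z_k)$ with $(\sigma_k,p_k,Q_k)\to(\sigma,p,Q)$ and $u_{\tau_k}(t_k,z_k)\to\overline u(t_0,z_0)$; a parallel statement holds for $\bar{\mathcal P}^{2,-}\underline u$ when $\underline u=(\inf_\tau u_\tau)_*$.

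For part~1, I fix such a jet, extract the approximating sequences, and apply Proposition~\ref{prop def vis}(1) to each subsolution $u_{\tau_k}$ to obtain $dd^c Q_k\geq 0$ together with $(dd^c Q_k)^n\geq e^{\sigma_k+F(t_k,z_k,u_{\tau_k}(t_k,z_k))}\mu(t_k,z_k)$. Letting $k\to\infty$ and using continuity of $F$ and $f$, together with closedness of the PSH cone, yields the subsolution inequality for $\overline u$ at $(t_0,z_0)$; applying Proposition~\ref{prop def vis}(1) a second time closes part~1.

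Part~2 is the main technical obstacle, since the supersolution characterisation only applies to jets with $dd^c Q\geq 0$, a property that need not propagate through bare jet stability. I plan to bypass this by working directly with the test-function formulation: given a lower test function $q$ for $\underline u$ at $(t_0,z_0)$ with $dd^c q_{t_0}(z_0)\geq 0$, the LSC-envelope definition produces $(\tilde t_k,\tilde z_k)\to(t_0,z_0)$ and indices $\tau_k$ with $u_{\tau_k}(\tilde t_k,\tilde z_k)\to\underline u(t_0,z_0)$. A standard sliding/quadratic-perturbation argument (after strictifying $q$ by a fourth-order perturbation and penalising the distance to $(\tilde t_k,\tilde z_k)$ by a quadratic term) locates a local minimiser $(\hat t_k,\hat z_k)\to(t_0,z_0)$ of $u_{\tau_k}$ against a perturbed test function $q_k$, whose $dd^c$ differs from $dd^c q_{t_0}(z_0)$ by a positive Euclidean form plus vanishing error and hence is $\geq 0$; the supersolution property of $u_{\tau_k}$ then yields the desired inequality in the limit, by continuity of $F$ and $f$.

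For part~3, I would apply the sliding-test-function lemma adapted to the sequential $\limsup/\liminf$: for a strict upper test function $q$ for $\overline u=(\limsup_k u_k)^*$ at $(t_0,z_0)$, a standard extraction produces $(t_j,z_j)\to(t_0,z_0)$ and $k_j\to\infty$ such that $u_{k_j}-q$ attains a local maximum at $(t_j,z_j)$ with $u_{k_j}(t_j,z_j)\to\overline u(t_0,z_0)$; applying the subsolution property of $u_{k_j}$ and letting $j\to\infty$ proves the subsolution conclusion for $\overline u$. The $\liminf$ case for $\underline u$ is analogous, combined with the PSH-strictification perturbation from part~2.
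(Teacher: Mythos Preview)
Your approach is correct and is the standard route to such stability results. Note that the paper itself supplies no detailed proof: it merely states that the lemma ``is deduced by combining Proposition~\ref{prop def vis} and Theorem~\ref{the maximal}''. Your argument is more explicit, relying on the parabolic jet-stability/sliding lemma (the analogue of \cite[Lemma~6.1, Remark~6.3]{CIL92}) rather than on the two-function maximum principle of Theorem~\ref{the maximal}; this is in fact the natural tool here, and your parts~1 and~3 go through as written.

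One point in your treatment of part~2 needs correction. You claim that the perturbed lower test function $q_k$ has $dd^c$ differing from $dd^c q_{t_0}(z_0)$ ``by a positive Euclidean form plus vanishing error and hence is $\geq 0$''. But a quadratic penalty $\alpha_k|(t,z)-(\tilde t_k,\tilde z_k)|^2$, when used to localise a minimum of $u_{\tau_k}-q$, effectively \emph{subtracts} a nonnegative form from $dd^c q$ (the effective lower test function is $q-\alpha_k|\cdot|^2$), so this perturbation cannot be invoked to force $dd^c q_k\geq 0$. The clean fix is to observe that if $dd^c q_{t_0}(z_0)$ has a zero eigenvalue then $(dd^c q_{t_0})^n(z_0)=0$ and the supersolution inequality is automatic since $\mu\geq 0$; hence one may assume $dd^c q_{t_0}(z_0)>0$ from the outset. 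After strictifying by a fourth-order term (which does not alter the complex Hessian at $(t_0,z_0)$), the sliding minima $(\hat t_k,\hat z_k)\to(t_0,z_0)$ exist without any quadratic penalty, and the test-function Hessians converge to $dd^c q_{t_0}(z_0)>0$, so are positive for large $k$ by continuity. With this modification your argument for part~2 is complete.
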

In the theory of viscosity solution, the comparison principle and Perron method are two key tools for studying
the existence and uniqueness of solution. The following comparison principle has been established in \cite{EGZ15b}:
\begin{The}\label{compa.the}
	\cite[pages 949-953]{EGZ15b}
	Let $u$  (resp. $v$) be a bounded subsolution (resp. supersolution) to the parabolic complex Monge–Amp\`ere equation
	\eqref{PMAfree} in $\Omega_T$.  Assume that one of the following conditions
	is satisfied
	\begin{itemize}
		\item [a)] $\mu (t, z)>0$ for every $(t, z)\in (0, T)\times\Omega$.
		\item[b)] $\mu$ is independent of $t$.
		\item[c)] Either $u$ or $v$ is locally Lipschitz in $t$ uniformly in $z$.
	\end{itemize}
	Then
	\begin{center}
		$\sup\limits_{\Omega_T}(u-v)\leq\sup\limits_{\partial_P(\Omega_T)}(u-v)_+,$
	\end{center}
where $\partial_P(\Omega_T)=(\{0\}\times\overline{\Omega})\cup ((0, T)\times\partial\Omega)$ is
	the parabolic boundary of $\Omega_T$ and
$u$ (resp. $v$) has been extended as an upper (resp. a lower) semicontinuous
function to $\overline{\Omega_T}$.
\end{The}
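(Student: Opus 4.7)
The plan is to adapt the doubling-variables/Jensen--Ishii proof of the standard parabolic comparison principle (Theorem \ref{compa.the}) to a $t$-dependent density $\mu = f\,dV$ that may vanish, using the $\Phi$-shift as a substitute for the usual time-translation trick and invoking \eqref{eq0.0 Main Theorem}--\eqref{eq0 Main Theorem} to control the resulting error terms. I would first reformulate the conclusion: setting
\begin{equation*}
u^\delta(t, z) := \sup_{|s|<\delta} u(t+s, \Phi(s, z)), \qquad v_\delta(t, z) := \inf_{|s|<\delta} v(t+s, \Phi(s, z)),
\end{equation*}
on an interior subdomain slightly larger than $(R, S) \times K$, the conclusion is equivalent to $u^\delta \leq v_\delta + \epsilon$ on $(R, S) \times K$.

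Next I would show that $u^\delta$ and $v_\delta$ are almost viscosity sub/super-solutions of the original PMA equation. Since $z \mapsto \Phi(s, z)$ is holomorphic, the translate $u_s(t, z) := u(t+s, \Phi(s, z))$ satisfies
\begin{equation*}
(dd^c u_s)^n = |\det D_z \Phi(s, z)|^2 \, e^{\partial_t u_s + F(t+s, \Phi(s, z), u_s)}\, f(t+s, \Phi(s, z))\, dV
\end{equation*}
in the viscosity sense. Condition \eqref{eq0 Main Theorem} gives $f(t+s, \Phi(s, z)) \leq (1+\epsilon_0) f(t, z)$ for $|s| < \delta$ small, while $|\det D_z \Phi(s, z)|^2 \to 1$ uniformly in $z \in K$ and $F$ is uniformly continuous, so $u_s$ is a subsolution of an arbitrarily small perturbation of the original equation. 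Lemma \ref{lem inf sup} then promotes this to $u^\delta$, and symmetrically to $v_\delta$.

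Arguing by contradiction, suppose $u^\delta - v_\delta > \epsilon$ somewhere in $(R, S) \times K$ for $\delta$ arbitrarily small. After subtracting a small cut-off from $u^\delta$ (to make it a strict subsolution and push the maximum into the interior of $(R, S) \times K$), apply Theorem \ref{the maximal} to
\begin{equation*}
\Psi_\alpha(t, \xi, \eta) = u^\delta(t, \xi) - v_\delta(t, \eta) - \tfrac{\alpha}{2}|\xi - \eta|^2 - (\text{localisation}),
\end{equation*}
obtaining jets $(\tau_1, p_1, Q^+) \in \bar{\mathcal P}^{2,+}u^\delta(\hat t, \hat\xi)$ and $(\tau_2, p_2, Q^-) \in \bar{\mathcal P}^{2,-}v_\delta(\hat t, \hat\eta)$ with $\tau_1 = \tau_2$ and the standard matrix bound. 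Proposition \ref{prop def vis} yields $(dd^c Q^+)^n \geq e^{\tau_1 + F(\hat t, \hat\xi, u^\delta)} f(\hat t, \hat\xi)$ and, when $Q^- \geq 0$, $(dd^c Q^-)^n \leq e^{\tau_2 + F(\hat t, \hat\eta, v_\delta)} f(\hat t, \hat\eta)$. Together with $u^\delta(\hat t, \hat\xi) - v_\delta(\hat t, \hat\eta) > \epsilon$, monotonicity of $F$ in $r$, and $Q^+ \leq Q^- + O(\alpha^{-1})$, this is a contradiction \emph{as long as} $f(\hat t, \hat\eta) > 0$.

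The main obstacle is the degenerate case $f(\hat t, \hat\eta) = 0$, in which the supersolution inequality collapses to $0 \leq 0$ and the argument above breaks down. This is exactly where condition \eqref{eq0.0 Main Theorem} is needed: it gives the exponential decay
\begin{equation*}
f(\hat t, \hat\xi) \leq \exp\!\Big(b - a\,\tfrac{|\langle \hat\xi - \hat\eta,\, \partial_t\Phi(0, \hat\eta)\rangle|}{|\hat\xi - \hat\eta|^2}\Big)
\end{equation*}
along the direction $\partial_t\Phi(0, \hat\eta)$. The delicate point is to engineer the penalty so that, at the contact point, $\hat\xi - \hat\eta$ has a quantitatively non-negligible component along $\partial_t\Phi(0, \hat\eta)$ --- for instance by adding an anisotropic quadratic term, or by exploiting that $u^\delta, v_\delta$ arise as sup/inf over $\Phi$-trajectories whose $s$-derivative at $0$ is precisely $\partial_t\Phi(0, \cdot)$. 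Once this directional control is in place, $f(\hat t, \hat\xi)$ becomes super-exponentially small in $\alpha$, the lower bound for $(dd^c Q^+)^n$ also vanishes, and the matrix inequality yields the contradiction on letting $\alpha \to \infty$ and then $\delta \to 0$.
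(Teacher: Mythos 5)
You have not proved the statement you were asked to prove. Theorem \ref{compa.the} is the comparison principle of Eyssidieux--Guedj--Zeriahi: its hypotheses are that \emph{one of} a) $\mu>0$, b) $\mu$ independent of $t$, c) $u$ or $v$ locally Lipschitz in $t$ holds, and its conclusion is the global bound $\sup_{\Omega_T}(u-v)\leq\sup_{\partial_P(\Omega_T)}(u-v)_+$. Your argument never invokes a), b) or c); instead it assumes the map $\Phi$ and the structural conditions \eqref{eq0.0 Main Theorem}--\eqref{eq0 Main Theorem}, which are not hypotheses of this theorem, and it aims at the local conclusion $u^\delta\leq v_\delta+\epsilon$ on $(R,S)\times K$, which is the conclusion of Theorem \ref{Main Theorem}, not of Theorem \ref{compa.the}. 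In other words, you have sketched the paper's main result rather than the cited classical comparison principle. A proof of Theorem \ref{compa.the} runs the doubling/Jensen--Ishii argument with the penalization $\frac{N}{2}(|\xi-z_0|^2+|\eta-z_0|^2)+\frac{N^2}{2}|t-t_0|^2$ and then closes the contradiction using a) directly (the density is bounded below near the max point), or c) to bound the time-jet $\tau_{N2}$ (Lipschitz in $t$ gives $|\tau_{N2}|\leq C$, so $e^{\tau_{N2}}f(t_N,\eta_N)\to 0$ while the subsolution side forces a positive lower bound), or b) by reducing to c) via sup/inf-convolution in $t$ alone (time translates remain sub/supersolutions precisely because $\mu$ does not depend on $t$). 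None of this appears in your write-up.

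Even judged as a sketch of Theorem \ref{Main Theorem}, the argument has a real gap at the step you yourself flag as ``delicate'': you need $\hat\xi-\hat\eta$ to have a controlled component along $\partial_t\Phi(0,\hat\eta)$, and you offer only the possibility of ``an anisotropic quadratic term'' without constructing it. The paper does not obtain this directional control from the penalty at all. It first replaces $u,v$ by the $\Phi$-sup/inf-convolutions, which are Lipschitz along the trajectories $s\mapsto(t+s,\Phi(s,z))$ (Lemmas \ref{lem regularization sub}, \ref{lem regularization super}); Lemma \ref{lem 4 weak compa} then converts this Lipschitz property into the jet estimate $|\tau-\langle p,\partial_t\Phi(0,z_0)\rangle|\leq k$, so that $p_{N2}=-N(\eta_N-z_0)$ forces $\tau_{N2}=O(1)-N\langle\eta_N-z_N,\partial_t\Phi(0,\eta_N)\rangle$, and \eqref{eq0.0 Main Theorem} applied at the nearby zero $z_N=\Phi(t_N-t_0,z_0)$ of $f(t_N,\cdot)$ kills $e^{\tau_{N2}}f(t_N,\eta_N)$. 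Without this mechanism (or a genuine substitute), your contradiction in the degenerate case $f(\hat t,\hat\eta)=0$ does not go through.
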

In order to recall the Perron method, we need the concepts of $\epsilon$-sub/super-barrier.
\begin{Def}
	a) A function $u\in USC([0, T)\times\bar{\Omega})$ is called $\epsilon$-subbarrier for
	\eqref{PMA} if $u$ is subsolution to \eqref{PMAfree}
	in the viscosity sense such that $u_0-\epsilon\leq u_*\leq u\leq u_0$ in 
	$\{0\}\times\bar{\Omega}$ and $\varphi-\epsilon\leq u_*\leq u\leq \varphi$ in
	$[0, T)\times\partial\Omega.$\\
	b) A function $u\in LSC([0, T)\times\bar{\Omega})$ is called $\epsilon$-superbarrier for
	\eqref{PMA} if $u$ is supersolution to \eqref{PMAfree}
	in the viscosity sense such that $u_0+\epsilon\geq u^*\geq u\geq u_0$ in 
	$\{0\}\times\bar{\Omega}$ and $\varphi+\epsilon\geq u^*\geq u\geq \varphi$ in
	$[0, T)\times\partial\Omega.$
\end{Def}
\begin{Prop}\label{prop.subbarrier}\cite[Proposition 4.1]{DLT}
	For all $\epsilon>0$,
	there exists a continuous $\epsilon$-subbarrier for \eqref{PMA}
	which is Lipschitz in $t$. 
\end{Prop}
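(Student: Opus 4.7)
The plan is to construct an explicit $\epsilon$-subbarrier by perturbing smooth approximations of $u_0$ and $\varphi$ with terms that exploit the defining function of the strongly pseudoconvex domain $\Omega$. The strong pseudoconvexity supplies Monge-Amp\`ere mass via a smooth strictly plurisubharmonic defining function $\rho$ with $\rho=0$ on $\partial\Omega$, $\rho<0$ in $\Omega$, and $dd^c\rho \geq c_0\omega$ near $\bar\Omega$ for some $c_0>0$; a linear-in-$t$ term will be used to control the exponential right-hand side. By Richberg regularization I would choose a smooth strictly psh $\tilde u_0$ with $u_0 - \epsilon/4 \leq \tilde u_0 \leq u_0$ on $\bar\Omega$, and extend $\varphi$ to $\tilde\varphi \in C^0([0,T)\times\bar\Omega)$, smooth in $z$ and Lipschitz in $t$, with $\tilde\varphi(0,\cdot) = \tilde u_0$ and $\tilde\varphi|_{[0,T)\times\partial\Omega}=\varphi$.

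I would then set
$$
\underline u(t, z) := \tilde\varphi(t, z) + A \chi(t)\, \rho(z) - Kt,
$$
where $\chi:[0,T)\to[0,1]$ is smooth with $\chi(0)=0$ and $\chi\equiv 1$ on $[T/2,T)$, and $A,K>0$ are parameters with $KT\leq \epsilon/2$. Since $A\chi\rho$ vanishes at $t=0$ and on the lateral boundary (where $\rho=0$), one has $\underline u(0,\cdot)=\tilde u_0 \in [u_0-\epsilon/4,u_0]$ and $\underline u|_{\text{lateral}}=\varphi-Kt \in [\varphi-\epsilon/2,\varphi]$, giving the $\epsilon$-subbarrier boundary bounds; Lipschitz regularity in $t$ is clear. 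For an upper test function $q$ of $\underline u$ at $(t_0,z_0)\in\Omega_T$, smoothness of $\underline u$ yields $\dt q(t_0,z_0) = \dt\tilde\varphi - K + A\chi'(t_0)\rho(z_0)$ and $dd^c q_{t_0}(z_0) \geq dd^c_z\tilde\varphi + A\chi(t_0)\,dd^c\rho$. For $t_0\geq T/2$ the MA contribution $A\chi(t_0)\,dd^c\rho \geq Ac_0\omega$ forces $(dd^c q_{t_0}(z_0))^n \geq (Ac_0)^n$ up to bounded corrections, which dominates the bounded right-hand side $e^{\dt q + F(t_0,z_0,q(t_0,z_0))}f(t_0,z_0)$ once $A$ is chosen large. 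For small $t_0$, the term $A\chi'(t_0)\rho(z_0)$ is negative (since $\rho<0$), pushing $\dt q$ sufficiently negative to make the exponential small, while the strict psh mass $dd^c\tilde u_0 \geq \lambda_0\omega$ (for some $\lambda_0>0$) supplies the LHS.

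\textbf{Main obstacle.} The hardest regime is the corner $(t_0,z_0) \to (0^+,\partial\Omega)$: the MA contribution $A\chi(t_0)\,dd^c\rho$ vanishes with $\chi(t_0)$, and simultaneously $A\chi'(t_0)\rho(z_0)$ may vanish with $\rho(z_0)$, so neither mechanism is automatically dominant. Resolving this requires a careful choice of the cutoff $\chi$ (for instance $\chi(t)\sim \sqrt{t}$ near $t=0$, making $\chi'(t)\to\infty$ so that $\chi'(t_0)\rho(z_0)$ stays controlled) together with a matching of the constants $A,K,\lambda_0$ against the sup-bounds of $F$ and $f$. An alternative is to construct $\underline u$ as the maximum of two or more explicit smooth subsolutions, each adapted to a different region of $\Omega_T$, using Lemma \ref{lem inf sup}, which guarantees that the maximum remains a subsolution. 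Once such a construction is fixed, the $\epsilon$-bounds follow directly by inspection and the subsolution inequality follows from Proposition \ref{prop def vis} applied pointwise.
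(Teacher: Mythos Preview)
The paper does not prove this proposition; it is quoted directly from \cite[Proposition 4.1]{DLT}, so there is no in-paper argument to compare against. Your outline is in the standard spirit of that reference (explicit barrier built from a strictly psh defining function, a smooth approximation of the initial datum, and a linear drift in $t$), and your alternative of taking the maximum of two adapted subsolutions via Lemma~\ref{lem inf sup} is in fact the mechanism that makes the construction work.

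That said, the single-formula barrier as you wrote it has genuine inconsistencies that you did not flag. First, $\varphi$ is only assumed continuous on $[0,T]\times\partial\Omega$, so there is in general no extension $\tilde\varphi$ that is simultaneously Lipschitz in $t$ \emph{and} equal to $\varphi$ on the lateral boundary; you must instead take $\tilde\varphi$ to be a Lipschitz-in-$t$ approximation with $\varphi-\epsilon/4\le\tilde\varphi\le\varphi$ there, and adjust the boundary inequalities accordingly. Second, your two constraints $\tilde\varphi(0,\cdot)=\tilde u_0$ and $\tilde\varphi|_{[0,T)\times\partial\Omega}=\varphi$ are incompatible at the corner $\{0\}\times\partial\Omega$, since any smooth strictly psh $\tilde u_0\le u_0$ obtained by shifting a Richberg regularization downward will satisfy $\tilde u_0<u_0=\varphi(0,\cdot)$ on $\partial\Omega$. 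Third, your proposed fix $\chi(t)\sim\sqrt t$ makes $\chi'$ unbounded near $t=0$, so $\underline u=\tilde\varphi+A\chi\rho-Kt$ would \emph{not} be Lipschitz in $t$, contradicting the statement you are proving. The underlying tension---$KT\le\epsilon/2$ forces $K$ small, while the subsolution inequality at small $t$ (where $\chi(t)\approx 0$ gives no Monge--Amp\`ere mass beyond $\lambda_0^n$) wants $K$ large---cannot be resolved by a single smooth formula; it really does require splitting into one barrier of the form $\tilde u_0-Ct$ (with $C$ large, valid at $t=0$ but far below $\varphi$ laterally) and a second barrier of the form $\psi+A\rho$ (valid laterally but far below $u_0$ at $t=0$), then taking their maximum. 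Once you commit to that decomposition the argument goes through.
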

\begin{Def}\label{def admissible}
	We say that $(u_0, \mu (0, .))$ is
	{\sl  admissible} if for all $\epsilon>0$, there
	exist $u_{\epsilon}\in C(\bar{\Omega})$ and $C_{\epsilon}>0$ such that $u_0\leq u_{\epsilon}\leq u_0+\epsilon$ and $(dd^c u_{\epsilon})^n\leq 
	e^{C_{\epsilon}}\mu(0, z)$ in the viscosity sense.  
\end{Def}
\begin{Prop}\cite[Theorem 1.3]{DLT}
	If $(u_0, \mu (0, .))$ is  admissible then the function
	 $u_\epsilon$ in the definition \ref{def admissible} can be taken to be psh in $\Omega$.
\end{Prop}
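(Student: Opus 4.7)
The plan is to construct $\tilde u_\epsilon$ as the upper envelope of psh minorants of the given $u_\epsilon$ and to show it inherits all the admissibility properties. Precisely, set
\[
\tilde u_\epsilon(z):=\bigl(\sup\{v(z):v\in PSH(\Omega),\ v\le u_\epsilon\ \text{on}\ \Omega\}\bigr)^{*}.
\]
Since $u_0$ is psh with $u_0\le u_\epsilon$, the family contains $u_0$, giving the sandwich $u_0\le\tilde u_\epsilon\le u_\epsilon\le u_0+\epsilon$, and by Choquet's lemma $\tilde u_\epsilon\in PSH(\Omega)$. Two things remain to check: the viscosity inequality $(dd^c\tilde u_\epsilon)^n\le e^{C_\epsilon}\mu(0,z)$, and continuity on $\bar\Omega$.

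For the viscosity inequality I would fix $z_0\in\Omega$ and a lower test function $q$ for $\tilde u_\epsilon$ at $z_0$ with $dd^c q\ge 0$. If $\tilde u_\epsilon(z_0)=u_\epsilon(z_0)$, then $q\le\tilde u_\epsilon\le u_\epsilon$ locally, with equality at $z_0$, so $q$ is also a lower test function for $u_\epsilon$ at $z_0$; the given viscosity supersolution property of $u_\epsilon$ hands us the desired bound. If instead $\tilde u_\epsilon(z_0)<u_\epsilon(z_0)$, upper semicontinuity of $\tilde u_\epsilon$ and continuity of $u_\epsilon$ produce a ball $B\ni z_0$ on which the strict gap persists; any local Bedford--Taylor balayage of $\tilde u_\epsilon|_{B'}$ on a sufficiently small $B'\subset B$ stays dominated by $u_\epsilon$ thanks to the gap, so by maximality of $\tilde u_\epsilon$ as the envelope, $\tilde u_\epsilon$ must already be maximal psh on $B$. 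Hence $(dd^c\tilde u_\epsilon)^n=0$ on $B$ in the viscosity sense, trivially satisfying the inequality.

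For continuity on $\bar\Omega$, upper semicontinuity is built into the usc regularization. For the reverse direction I would invoke Walsh--Bremermann continuity theory for Perron--Bremermann envelopes on bounded strongly pseudoconvex domains: use $u_0\in PSH(\Omega)\cap C(\bar\Omega)$ as a global continuous psh lower barrier and, at each boundary point $\zeta\in\partial\Omega$, build local continuous psh subbarriers of the form $u_0+A\rho$ from the strictly psh defining function $\rho$ of $\Omega$ (with $\rho<0$ in $\Omega$, $\rho=0$ on $\partial\Omega$), filling in the gap $u_\epsilon-u_0\le\epsilon$ by continuous psh competitors in the defining family that approximate $\tilde u_\epsilon$ from below pointwise on $\bar\Omega$.

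The main obstacle is precisely this boundary continuity of $\tilde u_\epsilon$: the contact-set lifting of the viscosity inequality and the non-contact balayage argument giving maximality are entirely standard envelope techniques, but ensuring $\tilde u_\epsilon\in C(\bar\Omega)$ requires Walsh--Bremermann envelope continuity adapted to strongly pseudoconvex $\Omega$, combined with the barrier construction from the strictly psh defining function of $\Omega$; the $\epsilon$-slack in the sandwich $u_0\le\tilde u_\epsilon\le u_0+\epsilon$ is the essential degree of freedom making this work, since boundary values of $\tilde u_\epsilon$ may a priori oscillate between $u_0|_{\partial\Omega}$ and $u_\epsilon|_{\partial\Omega}$.
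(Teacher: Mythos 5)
The paper does not prove this proposition; it is quoted verbatim from \cite[Theorem 1.3]{DLT}, and the proof there is exactly the envelope construction you propose: replace $u_\epsilon$ by $P(u_\epsilon)=(\sup\{v\in PSH(\Omega):v\le u_\epsilon\})^*$, verify the sandwich $u_0\le P(u_\epsilon)\le u_\epsilon\le u_0+\epsilon$, transfer the viscosity inequality on the contact set (a lower test for the envelope at a contact point is a lower test for $u_\epsilon$), and use balayage off the contact set to get maximality, hence $(dd^cP(u_\epsilon))^n=0$ there. Your treatment of those interior steps is correct, including the observation that the strict gap $P(u_\epsilon)(z_0)<u_\epsilon(z_0)$ persists on a small ball so that the maximal replacement stays below the obstacle.

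The one place your write-up does not close is the boundary continuity. The barrier $u_0+A\rho$ only yields $\liminf_{z\to\zeta}P(u_\epsilon)(z)\ge u_0(\zeta)$, which leaves a possible jump of size up to $\epsilon$ at $\zeta$, and your closing remark --- that the $\epsilon$-slack in the sandwich is ``the essential degree of freedom making this work'' --- points in the wrong direction: Definition \ref{def admissible} demands $u_\epsilon\in C(\bar\Omega)$ exactly, so an $\epsilon$-jump at the boundary is not acceptable, and no slack in the sandwich can absorb it. The correct barrier at $\zeta\in\partial\Omega$ is $h_{\zeta,\eta}=u_\epsilon(\zeta)-\eta+A_\eta\rho$ with $\rho$ a strictly psh defining function and $A_\eta$ large: this is psh, lies below $u_\epsilon$ on all of $\bar\Omega$ (near $\zeta$ by continuity of $u_\epsilon$, away from $\zeta$ because $\rho\le -c<0$ there), and gives $\liminf_{z\to\zeta}P(u_\epsilon)(z)\ge u_\epsilon(\zeta)-\eta$ for every $\eta>0$, so the envelope attains the boundary values of $u_\epsilon$ continuously; Walsh's translation argument then gives continuity on $\bar\Omega$. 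With that correction your proof coincides with the cited one.
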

\begin{Prop}\label{prop.superbarrier}\cite[Proposition 4.3]{DLT}
	If $(u_0 (z), \mu(0, z))$ is  
	admissible then for all $\epsilon>0$,
	there exists a continuous $\epsilon$-superbarrier for \eqref{PMA}
	which is Lipschitz in $t$. 
\end{Prop}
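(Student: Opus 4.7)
The plan is to construct $v$ as the pointwise minimum of two continuous, Lipschitz-in-$t$ supersolutions, invoking Lemma \ref{lem inf sup}(2) to conclude that the minimum is itself a supersolution. From admissibility (combined with the preceding refinement that $u_\epsilon$ may be taken psh) fix $u_\epsilon \in PSH(\Omega) \cap C(\bar\Omega)$ with $u_0 \le u_\epsilon \le u_0 + \epsilon/4$ and $(dd^cu_\epsilon)^n \le e^{C_\epsilon}\mu(0,z)$ in the viscosity sense. Using uniform continuity of $\varphi$ on $[0,T]\times\partial\Omega$, construct a continuous Lipschitz-in-$t$ majorant $\tilde\varphi$ on $[0,T)\times\partial\Omega$ with $\varphi \le \tilde\varphi \le \varphi + \epsilon/4$, e.g.\ by a Moreau--Yosida regularization from above in $t$.

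Define
\[
v_1(t,z):=u_\epsilon(z)+At+\epsilon/4, \qquad v_2(t,z):=H(t,z)+\epsilon/2,
\]
where $A>0$ is to be tuned large and $H(t,\cdot)$ is the harmonic extension of $\tilde\varphi(t,\cdot)$ to $\Omega$; by standard elliptic regularity $H(t,\cdot)$ is smooth in $\Omega$, and $H$ is continuous on $[0,T)\times\bar\Omega$ and Lipschitz in $t$ by the maximum principle. The key property of $v_2$: since $H$ is real-harmonic in $z$, the Hermitian matrix $(\partial_j\bar\partial_kH)_{j,k}$ has vanishing trace, hence at every point is either identically zero or has a strictly negative eigenvalue. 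For any $C^{1,2}$ lower test $q$ of $v_2$ at $(t_0,z_0)\in\Omega_T$ with $dd^cq_{t_0}(z_0)\ge 0$, the standard second-order touching argument gives $0 \le dd^cq_{t_0}(z_0)\le dd^cH(t_0,z_0)$, which forces $dd^cH(t_0,z_0)=0$ (trace zero and PSD) and hence $dd^cq_{t_0}(z_0)=0$; the supersolution inequality then reads $0\le e^{\partial_tq+F}\mu$, trivially true. So $v_2$ is a viscosity supersolution. For $v_1$, a lower test $q$ at $(t_0,z_0)$ with $dd^cq_{t_0}(z_0)\ge 0$ yields, after the shift $\tilde q(z):=q(t_0,z)-At_0-\epsilon/4$, a lower test for $u_\epsilon$ at $z_0$ with PSD Hessian, so admissibility gives $(dd^cq_{t_0}(z_0))^n\le e^{C_\epsilon}\mu(0,z_0)$; freezing $z=z_0$ yields $\partial_tq(t_0,z_0)=A$, and for $A$ large (depending on $C_\epsilon$ and $\sup|F|$) the inequality $e^{C_\epsilon}\mu(0,z_0)\le e^{A+F}\mu(t_0,z_0)$ follows.

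Setting $v:=\min(v_1,v_2)$, the required bounds are verified as follows. At $t=0$ the maximum principle applied to the psh function $u_0$ with boundary values $\varphi(0,\cdot)$ gives $H(0,\cdot)\ge u_0$, so $v_2(0,\cdot)\ge u_0+\epsilon/2$; meanwhile $v_1(0,\cdot)=u_\epsilon+\epsilon/4\in[u_0+\epsilon/4,\,u_0+\epsilon/2]$. Hence $v(0,\cdot)=v_1(0,\cdot)\in[u_0,\,u_0+\epsilon]$ as required. On the lateral boundary $[0,T)\times\partial\Omega$, $v_2=\tilde\varphi+\epsilon/2\in[\varphi+\epsilon/2,\,\varphi+3\epsilon/4]$, and for $A$ sufficiently large relative to the modulus of continuity of $\varphi$ one has $v_1\ge\varphi$ pointwise; so $v=v_2\in[\varphi,\,\varphi+\epsilon]$, and $v$ is Lipschitz in $t$ because both $v_1$ and $v_2$ are. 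The main obstacle is the supersolution verification of $v_1$ at points $(t_0,z_0)\in\Omega_T$ where $\mu(t_0,z_0)=0$ while $\mu(0,z_0)>0$: the bound $(dd^cq_{t_0})^n\le e^{C_\epsilon}\mu(0,z_0)>0$ does not yield the required $0$ on the right-hand side. The resolution uses the minimum structure: since $v_2$ is uniformly bounded in terms of $\|\varphi\|_\infty$ while $v_1-u_\epsilon$ grows linearly in $t$, tuning $A$ ensures $v_2\le v_1$ at all such problematic points past a fixed time $t_*(A)$, so $v=v_2$ locally and inherits its supersolution property; for $t_0<t_*(A)$ the continuity of $f$ ensures $f(t_0,z_0)\ge f(0,z_0)/2$ locally, restoring the direct estimate on $v_1$.
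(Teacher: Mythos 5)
Your construction follows the skeleton of the barrier argument behind \cite[Proposition 4.3]{DLT} (the paper itself gives no proof, only the citation): take the minimum of $u_\epsilon(z)+At+\mathrm{const}$ and a harmonic-in-$z$ extension of a Lipschitz-in-$t$ majorant of $\varphi$. Your verification that $v_2$ is a supersolution (trace-zero complex Hessian argument), the initial and lateral boundary inequalities, and the reduction of the minimum of two supersolutions to Lemma \ref{lem inf sup} are all correct. The difficulty is exactly where you locate it, but your patch does not close it.

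The supersolution test for $v_1=u_\epsilon+At+\epsilon/4$ at $(t_0,z_0)$ requires $(dd^cq_{t_0})^n(z_0)\le e^{A+F}\mu(t_0,z_0)$, whereas admissibility only gives $(dd^cq_{t_0})^n(z_0)\le e^{C_\epsilon}\mu(0,z_0)$; when $f(t_0,z_0)<f(0,z_0)$ (in particular $f(t_0,z_0)=0<f(0,z_0)$) no choice of $A$ bridges the two. Your patch fails for two reasons. First, the assertion that for $t_0<t_*(A)$ ``continuity of $f$ ensures $f(t_0,z_0)\ge f(0,z_0)/2$'' is not a uniform statement: the time below which $|f(t_0,z_0)-f(0,z_0)|\le f(0,z_0)/2$ depends on the value $f(0,z_0)$, which may be arbitrarily small. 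For instance $f(t,z)=|t-g(z)|$ with $g>0$ continuous and $\inf g=0$ produces, for every $t_*>0$, points $(t_0,z_0)=(g(z_0),z_0)$ with $t_0<t_*$, $f(t_0,z_0)=0$ and $f(0,z_0)>0$. Second, $t_*(A)$ is of order $1/A$ but is strictly positive for every $A$, so the strip $(0,t_*)\times\Omega$ on which $v$ may coincide with $v_1$ is never empty, and there the supersolution inequality genuinely fails wherever $u_\epsilon$ admits a lower test with nondegenerate complex Hessian (e.g.\ $u_\epsilon$ smooth and strictly psh near $z_0$) while $f(t_0,z_0)=0$. What is missing is an input comparing $\mu(t,\cdot)$ with $\mu(0,\cdot)$ for small $t>0$; in \cite{DLT} this is supplied by the structural hypotheses on $f$ (which in particular force $\{f(t,\cdot)=0\}\subset\{f(s,\cdot)=0\}$ for $s<t$), not by bare continuity. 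As written, your barrier is a supersolution only when $\mu$ is independent of $t$ or when such a comparison is available.
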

\begin{Lem}\label{lem perron}(Perron method)\cite[Lemma 2.12]{DLT}
	Assume that for every $\epsilon>0$, the problem \eqref{PMA} admits 
	a continuous $\epsilon$-superbarrier which is Lipschitz in $t$
	and a continuous $\epsilon$-subbarrier. 
	Denote by $S$ the family of all continuous subsolutions  to	\eqref{PMA}.
	Then $\phi_S=\sup\{v: v\in S\}$ is a
	discontinuous viscosity solution to \eqref{PMA}, i.e., $(\phi_S)^*$ is a subsolution and $(\phi_S)_*$
	is a supersolution.
\end{Lem}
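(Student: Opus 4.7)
The plan follows the classical Perron method for viscosity solutions, adapted to the parabolic complex Monge-Amp\`ere setting. I would split the argument into three pieces: first, a priori bounds and parabolic boundary behavior of $\phi_S$ using the barriers and Theorem \ref{compa.the}; then the subsolution property of $(\phi_S)^*$ via Lemma \ref{lem inf sup}(1); and finally the supersolution property of $(\phi_S)_*$ via the standard ``bump function'' contradiction argument.

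For the bounds and boundary data, fix $\epsilon > 0$ and let $\underline{u}_\epsilon$ be a continuous $\epsilon$-subbarrier and $\overline{u}_\epsilon$ a continuous $\epsilon$-superbarrier that is Lipschitz in $t$. Since $\underline{u}_\epsilon \in S$, one has $\underline{u}_\epsilon \leq \phi_S$. For the upper bound, any $v \in S$ satisfies $v \leq \overline{u}_\epsilon$ on the parabolic boundary, so Theorem \ref{compa.the}(c) (whose hypothesis is met because $\overline{u}_\epsilon$ is Lipschitz in $t$) yields $v \leq \overline{u}_\epsilon$ on $\overline{\Omega_T}$; hence $\phi_S \leq \overline{u}_\epsilon$. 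Passing to usc and lsc envelopes and letting $\epsilon \to 0$ along a countable sequence of barriers gives $(\phi_S)^* = \varphi = (\phi_S)_*$ on $[0,T) \times \partial \Omega$ and $(\phi_S)^*(0,\cdot) = u_0 = (\phi_S)_*(0,\cdot)$, the parabolic boundary conditions demanded of any sub/supersolution of \eqref{PMA}. For the interior subsolution inequality, Lemma \ref{lem inf sup}(1) applies directly: the family $S$ consists of viscosity subsolutions of \eqref{PMAfree}, is locally uniformly bounded by the barriers, and $(\sup_{v \in S} v)^* = (\phi_S)^*$, so $(\phi_S)^*$ is a subsolution.

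The heart of the argument is showing that $(\phi_S)_*$ is a supersolution. I argue by contradiction: if it fails at some $(t_0, z_0) \in \Omega_T$, then by Proposition \ref{prop def vis}(2) there is a jet $(\tau, p, Q) \in \bar{\mathcal P}^{2,-} (\phi_S)_*(t_0, z_0)$ with $dd^c Q \geq 0$ and strict inequality
\[
(dd^c Q)^n > e^{\tau + F(t_0, z_0, (\phi_S)_*(t_0, z_0))} \mu(t_0, z_0).
\]
I then produce a smooth quadratic ``bump''
\[
q_\eta(t,z) = (\phi_S)_*(t_0,z_0) + \tau(t-t_0) + \mathrm{Re}\langle p, z-z_0\rangle + \tfrac12 \langle Q(z-z_0), z-z_0\rangle + \eta - c\bigl(|t-t_0|^2 + |z-z_0|^2\bigr),
\]
and tune $c, \eta > 0$ and the radius $r$ of a parabolic cylinder $K = (t_0-r, t_0+r) \times B(z_0, r) \Subset \Omega_T$ so that: by continuity of $F$, $f$ and of determinants, $q_\eta$ is a smooth classical subsolution of \eqref{PMAfree} on $K$ with $dd^c (q_\eta)_t \geq 0$; the penalty $-c r^2$ dominates $\eta$ near $\partial_P K$, forcing $q_\eta < \phi_S$ on $\partial_P K \setminus \{(t_0, z_0)\}$; and along a sequence $(t_m, z_m) \to (t_0, z_0)$ with $\phi_S(t_m, z_m) \to (\phi_S)_*(t_0, z_0)$ one has $q_\eta(t_m, z_m) > \phi_S(t_m, z_m)$ for large $m$. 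Setting $\tilde\phi := \max(\phi_S, q_\eta)$ on $K$ and $\tilde\phi := \phi_S$ off $K$ then yields a viscosity subsolution strictly exceeding $\phi_S$ somewhere; after a sup-convolution regularization it becomes a continuous element of $S$, contradicting the definition of $\phi_S$.

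The main obstacle I anticipate is keeping the perturbed $q_\eta$ inside the admissible cone $\{dd^c q_\eta \geq 0\}$ on which the Monge-Amp\`ere subsolution inequality is meaningful: the quadratic penalty subtracts $c I$ from the complex Hessian, so one must simultaneously shrink $c$ and the cylinder radius $r$ in order to preserve both the semipositivity and the strict Monge-Amp\`ere gap — and this is precisely where the strict inequality from the contradiction hypothesis and the continuity of $F$ and $f$ are spent. A secondary technical point is that $\phi_S$ is only upper semicontinuous in general, so the glued $\tilde\phi$ must be approximated by continuous subsolutions (via a standard sup-convolution, with Lemma \ref{lem inf sup}(1) used to verify the approximants remain subsolutions) before it can be readmitted into the defining family $S$.
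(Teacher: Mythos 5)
The paper does not prove this lemma itself — it is quoted from \cite[Lemma 2.12]{DLT} — so I am judging your argument on its own terms. Your overall scheme (barriers plus Theorem \ref{compa.the}(c) for the bounds and parabolic boundary values, Lemma \ref{lem inf sup}(1) for the subsolution property of $(\phi_S)^*$, and a bump construction for the supersolution property of $(\phi_S)_*$) is the standard Perron argument and is the right route; the first two parts are correct as written.

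The genuine gap is the final step, where you must re-enter the family $S$ of \emph{continuous} subsolutions. Your glued function $\tilde\phi=\max(\phi_S,q_\eta)$ is only usc, and sup-convolution does not repair this: the sup-convolution of a subsolution of \eqref{PMAfree} is a subsolution only of a perturbed equation in which $F$ and $\mu$ are replaced by infima over shrinking neighbourhoods (this is exactly the loss visible in Lemma \ref{lem regularization sub}(iv), where the factors $(1-\epsilon)$, $F_k$, $f_k$ appear), and it also pushes the function above $\varphi$ and $u_0$ near the parabolic boundary, so the regularized function need not lie in $S$. The correct repair exploits that $\phi_S$, as a supremum of continuous functions, is lsc: once $q_\eta<\phi_S$ on the compact set $\partial K$, for each $x\in\partial K$ some $v_x\in S$ satisfies $v_x>q_\eta$ near $x$, and a finite subcover gives $V:=\max_i v_{x_i}\in S$ with $V>q_\eta$ on a neighbourhood of $\partial K$; then $\max(V,q_\eta)$ on $K$ glued with $V$ outside $K$ is continuous, is a subsolution, belongs to $S$, and exceeds $\phi_S$ at the points $(t_m,z_m)$ — the desired contradiction, with no regularization needed. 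Two smaller points: (1) with a jet from $\bar{\mathcal P}^{2,-}$ the quadratic time-penalty $-c|t-t_0|^2$ need not dominate the $o(|t-t_0|)$ error on the lateral boundary of the cylinder; it is cleaner to start from an actual lower test function $q$ (as in the paper's definition of supersolution), for which $(\phi_S)_*-q\ge 0$ holds exactly near $(t_0,z_0)$, and to subtract $|t-t_0|^2+|z-z_0|^4$ to create a strict minimum on $\partial K$ before adding $\eta$. (2) Your concern about the admissible cone resolves itself: $dd^cQ\ge 0$ together with the strict inequality $(dd^cQ)^n>e^{\tau+F}\mu\ge 0$ forces $dd^cQ>0$, so a sufficiently small perturbation stays strictly positive.
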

\section{Some useful lemmas}
Throughout this section, we always suppose that $\Phi$ and $f$ satisfy the conditions as in Theorem \ref{Main Theorem}. 
Given a bounded function $u:\Omega_T\rightarrow \R$ and a constant $A>2 osc_{\Omega_T}(u)$. For every relatively compact open subet
$U$ of $\Omega$ and for each constant $0<\delta\ll 1$ satisfying $\Phi ([-\delta, \delta]\times U)\subset\Omega)$, we 
define
\begin{center}
	$u^k(t, z)=\sup\{u(t+s, \Phi(s, z))-k|s|: |s|\leq\dfrac{A}{k} \}$,
\end{center}
and
\begin{center}
		$u_k(t, z)=\inf\{u(t+s, \Phi(s, z))+k|s|: |s|\leq\dfrac{A}{k} \}$,
\end{center}
for every $k>\max\{\dfrac{A}{\delta}, \dfrac{2A}{T}\}$ and $(t, z)\in (A/k, T-A/k)\times U$.

We have the following modified version of \cite[Lemma 3.5]{EGZ15b}:
\begin{Lem}\label{lem regularization sub}
	Assume that $u$ is a bounded upper semicontinuous function in $\Omega_T$. Then 
	\begin{itemize}
		\item[(i)] $u^k$ is upper semicontinuous in
		$(A/k, T-A/k)\times U$;
		\item[(ii)]  for all $(t, z)\in (A/k, T-A/k)\times U$,
		\begin{center}
				$u(t, z)\leq u^k(t, z)\leq \sup_{|s|\leq A/k}u(t+s, \Phi(s, z));$
		\end{center}
		\item[(iii)] if $(t, z)$ and $(t+s, \Phi (s, z))$ belong in $(A/k, T-A/k)\times U$ then
		\begin{center}
			$|u^k(t, z)-u^k(t+s, \Phi (s, z))|\leq k|s|;$
		\end{center}
	\item[(iv)] if $\MAu\geq e^{\dt u+F(t, z, u)}\mu(t, z)$ in the viscosity sense in $\Omega_T$ then,
	 for every $0<\epsilon<1$, there exists $k_{\epsilon}>0$ such that, for every $k>k_{\epsilon}$, 
	 \begin{equation}\label{eq0 lem regu}
	 (dd^c u^k)^n\geq (1-\epsilon)e^{\dt u^k+F_k(t, z, u^k)}f_k(t, z)dV,
	 \end{equation}
	in the viscosity sense in $(\delta, T-\delta)\times U$,
	where $F_k(t, z, r)=\inf_{|s|\leq A/k}F(t+s, \Phi(s, z), r)$ and $f_k(t, z)=\inf_{|s|\leq A/k}f(t+s, \Phi(s, z))$.
	\end{itemize}
\end{Lem}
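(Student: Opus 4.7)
The plan is to follow the sup-convolution strategy of \cite[Lemma 3.5]{EGZ15b}, replacing the spatial translation $z \mapsto z+h$ used there by the holomorphic deformation $z \mapsto \Phi(s,z)$. Parts (i)--(iii) are soft. For (i), I would note that the integrand $(s,t,z) \mapsto u(t+s,\Phi(s,z)) - k|s|$ is jointly upper semicontinuous on $[-A/k,A/k]\times (A/k,T-A/k)\times U$; given $(t_n,z_n)\to (t_0,z_0)$, the supremum defining $u^k(t_n,z_n)$ is attained at some $s_n\in[-A/k,A/k]$, and after passing to a convergent subsequence $s_n\to s_\infty$, upper semicontinuity of $u$ yields $\limsup_n u^k(t_n,z_n)\le u(t_0+s_\infty,\Phi(s_\infty,z_0)) - k|s_\infty|\le u^k(t_0,z_0)$. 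For (ii) the lower bound follows by taking $s=0$ (using $\Phi(0,z_0)=z_0$) and the upper bound is immediate from the definition. For (iii), I would compare the two suprema by reindexing the dummy variable in the one defining $u^k(t+s,\Phi(s,z))$ so as to match the optimizer for $u^k(t,z)$; the penalty terms then differ by at most $k|s|$, using the structural relation between $\Phi(s',\Phi(s,\cdot))$ and $\Phi(s+s',\cdot)$ dictated by the deformations under consideration.

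The main content is (iv). Let $q$ be an upper test function for $u^k$ at $(t_0,z_0)\in (\delta,T-\delta)\times U$. By upper semicontinuity and compactness, the supremum defining $u^k(t_0,z_0)$ is attained at some $s^*\in[-A/k,A/k]$, so
$$u^k(t_0,z_0) = u(t_0+s^*,\Phi(s^*,z_0)) - k|s^*|.$$
Set $(\tilde t_0,\tilde z_0) := (t_0+s^*,\Phi(s^*,z_0))$. On a neighborhood of $(t_0,z_0)$ one has $u(t+s^*,\Phi(s^*,z)) - k|s^*| \le u^k(t,z) \le q(t,z)$ with equality at $(t_0,z_0)$. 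Since $z\mapsto \Phi(s^*,z)$ is holomorphic and $C^1$-close to the identity for $|s^*|$ small (ensured by $k$ large), it admits a local holomorphic inverse $\Psi(s^*,\cdot)$ near $\tilde z_0$, and
$$\tilde q(\tau,\zeta) := q\bigl(\tau - s^*,\Psi(s^*,\zeta)\bigr) + k|s^*|$$
is an upper test function for $u$ at $(\tilde t_0,\tilde z_0)$. Applying the subsolution inequality for $u$ there and invoking the biholomorphic transformation law
$$(dd^c_\zeta \tilde q_{\tilde t_0})^n(\tilde z_0) \;=\; |\det D_z\Phi(s^*,z_0)|^{-2}\,(dd^c_z q_{t_0})^n(z_0),$$
together with $\partial_\tau \tilde q(\tilde t_0,\tilde z_0) = \partial_t q(t_0,z_0)$, gives
$$(dd^c_z q_{t_0})^n(z_0) \;\ge\; |\det D_z\Phi(s^*,z_0)|^{2}\, e^{\partial_t q(t_0,z_0) + F(\tilde t_0,\tilde z_0,u(\tilde t_0,\tilde z_0))}\, f(\tilde t_0,\tilde z_0)\,dV.$$
Three passages then finish the argument: monotonicity of $F$ in $r$ combined with $u(\tilde t_0,\tilde z_0) = q(t_0,z_0) + k|s^*|\ge u^k(t_0,z_0)$ yields $F(\tilde t_0,\tilde z_0,u(\tilde t_0,\tilde z_0))\ge F_k(t_0,z_0,u^k(t_0,z_0))$; the defining infimum gives $f(\tilde t_0,\tilde z_0)\ge f_k(t_0,z_0)$; and the Jacobian factor $|\det D_z\Phi(s^*,z_0)|^{2}$ exceeds $1-\epsilon$ uniformly on $\bar U$ for $k$ larger than some threshold $k_\epsilon$.

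The main obstacle I anticipate is making the biholomorphic change-of-variable formula for $(dd^c)^n$ rigorous at the test-function level and extracting uniform control on the complex Jacobian $|\det D_z\Phi(s,z)|$ on $\bar U$ as $s\to 0$. The latter follows from joint continuity of $D_z\Phi$, itself a consequence of joint continuity of $\Phi$ on $(-1,1)\times\Omega$ and holomorphicity in $z$ via Cauchy estimates on a slightly larger relatively compact subset of $\Omega$, together with $D_z\Phi(0,\cdot)=I$. The auxiliary requirement $dd^c q_{t_0}(z_0)\ge 0$ needed to apply Proposition \ref{prop def vis} is automatic because $z\mapsto u(t+s,\Phi(s,z))$ is plurisubharmonic (composition of a plurisubharmonic function with a holomorphic map), so $u^k$, being the upper semicontinuous envelope of a supremum of such functions, is plurisubharmonic in $z$, and then Remark \ref{rem subsolution}(a) applies to any upper test function. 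Hypotheses \eqref{eq0.0 Main Theorem} and \eqref{eq0 Main Theorem} of Theorem \ref{Main Theorem} are not used in this lemma; they enter only later in the comparison-principle argument.
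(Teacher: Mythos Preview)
Your approach is essentially the paper's, and parts (i), (ii), and (iv) are correct as sketched. Two points deserve comment.

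For (iii) your reindexing argument is incomplete. If $s_0\in[-A/k,A/k]$ is the optimizer for $u^k(t,z)$, then matching it inside the supremum defining $u^k(t+s,\Phi(s,z))$ requires the parameter $s_0-s$, which may fall outside $[-A/k,A/k]$. The paper handles this as a separate case: when $|s_0-s|>A/k$, one exploits the standing choice $A>2\,\mathrm{osc}_{\Omega_T}(u)$ to bound $u(t+s_0,\Phi(s_0,z))-u(t+s,\Phi(s,z))$ crudely by $2\,\mathrm{osc}(u)<A<k|s_0-s|$, after which the triangle inequality $|s_0-s|-|s_0|\le |s|$ closes the estimate. Your sketch never invokes the hypothesis $A>2\,\mathrm{osc}(u)$, so this case is missing. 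Separately, the ``structural relation'' you allude to is the local semigroup identity $\Phi(s_0-s,\Phi(s,z))=\Phi(s_0,z)$; this is not among the listed hypotheses on $\Phi$, but the paper's own argument for the case $|s_0-s|\le A/k$ tacitly uses it as well, so you are in good company---just be explicit that it is being assumed.

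For (iv) the only difference from the paper is organizational. The paper first shows that each fixed-$s_0$ slice $u_{s_0}(t,z):=u(t+s_0,\Phi(s_0,z))-k|s_0|$ is a subsolution to the modified equation \eqref{eq3 lem regu}, and then invokes the stability-under-supremum Lemma~\ref{lem inf sup} to pass to $u^k=\sup_{|s_0|\le A/k}u_{s_0}$. You instead test $u^k$ directly, extract the optimizer $s^*$, and observe that $q$ is then automatically an upper test for $u_{s^*}$; the pullback computation is identical. Both routes are valid, and your remark that hypotheses \eqref{eq0.0 Main Theorem} and \eqref{eq0 Main Theorem} play no role in this lemma is correct.
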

\begin{proof}
	(i) Let $(t_0, z_0)\in  (A/k, T-A/k)\times U$. We will show that
	\begin{center}
		$u^k(t_0, z_0)\geq \limsup\limits_{(t, z)\to (t_0, z_0)}u^k(t, z).$
	\end{center}
Assume that $(t_m, z_m)\in  (A/k, T-A/k)\times U$ satisfies $(t_m, z_m)\rightarrow (t_0, z_0)$
as $m\to\infty$ and
\begin{center}
	$\limsup\limits_{(t, z)\to (t_0, z_0)}u^k(t, z)=\lim\limits_{m\to\infty}u^k(t_m, z_m).$
\end{center}
Since $u$ is usc and $\Phi$ is continuous, by the definition of $u^k$, we have
\begin{center}
	$u^k(t_m, z_m)=u(s_m+t_m, \Phi(s_m, z_m))-k|s_m|,$
\end{center}
for some $|s_m|\leq A/k$. Let $\{s_{m_l}\}$ be a subsequence of $\{s_m\}$ 
which converges to a point $s_0\in [-A/k, A/k]$. Then
\begin{flushleft}
		$\begin{array}{ll}
\limsup\limits_{(t, z)\to (t_0, z_0)}u^k(t, z)&=\lim\limits_{m_l\to\infty}u^k(t_{m_l}, z_{m_l})\\
&=\lim\limits_{m_l\to\infty}(u(s_{m_l}+t_{m_l}, \Phi(s_{m_l}, z_{m_l}))-k|s_{m_l}|)\\
&=\lim\limits_{m_l\to\infty}u(s_{m_l}+t_{m_l}, \Phi(s_{m_l}, z_{m_l}))-k|s_0|\\
&\leq u(s_0+t_0, \Phi(s_0, z_0))-k|s_0|\\
&\leq u^k(t_0, z_0).
	\end{array}$
\end{flushleft}
Hence, $u^k$ is usc in $(A/k, T-A/k)\times U$.\\
(ii) Obvious.\\
(iii) Let $|s_0|\leq A/k$ such that $u^k(t, z)=u(t+s_0, \Phi(s_0, z))-k|s_0|$. If $|s-s_0|>A/k$ then
\begin{flushleft}
	$\begin{array}{ll}
	u^k(t, z)=u(t+s_0, \Phi(s_0, z))-k|s_0| &\leq u(t+s, \Phi(s, z))+2 osc_{\Omega_T}u-k|s_0|\\
	&\leq u^k(t+s, \Phi(s, z))+A-k|s_0|\\
	&\leq  u^k(t+s, \Phi(s, z))+k|s-s_0|-k|s_0|\\
	&\leq  u^k(t+s, \Phi(s, z))+k|s|.
	\end{array}$
\end{flushleft}
If $|s-s_0|\leq A/k$ then 
\begin{flushleft}
	$\begin{array}{ll}
u^k(t, z)=u(t+s_0, \Phi(s_0, z))-k|s_0|&\leq u^k(t+s, \Phi(s, z))+k|s-s_0|-k|s_0|\\
&\leq  u^k(t+s, \Phi(s, z))+k|s|.
	\end{array}$
\end{flushleft}
Hence
\begin{center}
	$u^k(t, z)-u^k(t+s, \Phi(s, z))\leq k|s|.$
\end{center}
By the same argument, we also have
\begin{center}
	$u^k(t+s, \Phi(s, z))-u^k(t, z)\leq k|s|.$
\end{center}
Therefore
\begin{center}
		$|u^k(t, z)-u^k(t+s, \Phi(s, z))|\leq k|s|.$
\end{center}
(iv) Let $r_0>0$ such that $V:=U+r_0\mathbb{B}^{2n}\Subset\Omega$. 
Since $\Phi(t, z)$ is holomorphic in $z$ and converges locally uniformly to $Id$ as $t\rightarrow 0$, we have
$\frac{\partial\Phi_{\alpha}}{\partial z_{\beta}}(t, z)$ converges uniformly in $V$ to $\delta_{\alpha\beta}$ as  $t\rightarrow 0$ for every
$1\leq\alpha, \beta\leq n$. 
Hence, for every $0<\epsilon<1$, there exists $0<r_1<\delta$ such that $\Phi([-r_1, r_1]\times V)\Subset\Omega$ and
\begin{equation}\label{eq1 lem regu}
\Big|\det \left(\dfrac{\partial\Phi_j}{\partial z_k}(t, z)\right)\Big|^2>1-\epsilon,
\end{equation}
for every $(t, z)\in [-r_1, r_1]\times V$. Denote $k_{\epsilon}=\max\{A/r_1, 2A/T\}$. We will show that \eqref{eq0 lem regu}
holds in the viscosity sense in $(\delta, T-\delta)\times U$ for every $k>k_{\epsilon}$.

Let $(t_0, z_0)\in (\delta, T-\delta)\times U$, $s_0\in (-A/k, A/k)$ and let $q$ be an upper test function of
 $u_{s_0}(t, z):=u(t+s_0, \Phi (s_0, z))-k|s_0|$ at $(t_0, z_0)$. Then $\hat{q}(t, z):=q(t-s_0, \Phi^{-1}(s_0, z))+k|s_0|$
 is an  upper test function of $u$ at $(\hat{t}, \hat{z})=(t_0+s_0, \Phi(s_0, z_0))$. Since $\MAu\geq e^{\dt u+F(t, z, u)}\mu(t, z)$ 
 in the viscosity sense, we have
 \begin{equation}\label{eq2 lem regu}
 	(dd^c\hat{q}(\hat{t}, \xi))^n|_{\xi=\hat{z}}\geq 
 	e^{\dt\hat{q}(\hat{t}, \hat{z})+F(\hat{t}, \hat{z}, \hat{q}(\hat{t}, \hat{z}))}\mu(\hat{t}, \hat{z}).
 \end{equation}
 Note that $\dt\hat{q}(\hat{t}, \hat{z})=\dt q(t_0, z_0)$ and
 \begin{center}
 	$(dd^c q(t_0, \xi))^n|_{\xi=z_0}=\left|\det \left(\dfrac{\partial\Phi_j}{\partial z_k}(t, z)\right)\right|^2(dd^c\hat{q}(\hat{t}, \xi))^n|_{\xi=\hat{z}}.$
 \end{center}
Therefore, by \eqref{eq1 lem regu} and \eqref{eq2 lem regu}, we have
\begin{flushleft}
	$\begin{array}{ll}
(dd^c q(t_0, \xi))^n|_{\xi=z_0}&\geq (1-\epsilon)e^{\dt q(t_0, z_0)+F(\hat{t}, \hat{z}, \hat{q}(\hat{t}, \hat{z}))}\mu(\hat{t}, \hat{z})\\
&\geq (1-\epsilon)e^{\dt q(t_0, z_0)+F(\hat{t}, \hat{z}, q(t_0, z_0))}\mu(\hat{t}, \hat{z})\\
&\geq (1-\epsilon)e^{\dt q(t_0, z_0)+F_k(t_0, z_0, q(t_0, z_0))}f_k(t_0, z_0)dV.
	\end{array}$
\end{flushleft}
Since $(t_0, z_0)$ and $q$ are arbitrary, we get $u_{s_0}$ is a subsolution to the equation
\begin{equation}\label{eq3 lem regu}
(dd^c w)^n= (1-\epsilon)e^{\dt w+F_k(t, z, w)}f_k(t, z)dV,
\end{equation}
 in
 $(\delta, T-\delta)\times U$. Then, it follows from Lemma \ref{lem inf sup} that the function
 $$u^k=\sup_{|s_0|\leq A/k}u_{s_0}=(\sup_{|s_0|\leq A/k}u_{s_0})^*$$ is a
  subsolution to \eqref{eq3 lem regu} in
 $(\delta, T-\delta)\times U$.
 
 The proof is completed.
\end{proof}
By the same argument, we have
\begin{Lem}\label{lem regularization super}
	Assume that $u$ is a bounded lower semicontinuous function in $\Omega_T$. Then 
	\begin{itemize}
		\item[(i)] $u_k$ is lower semicontinuous in	$(A/k, T-A/k)\times U$;
		\item[(ii)] for all $(t, z)\in (A/k, T-A/k)\times U$,
		\begin{center}
			$u(t, z)\geq u_k(t, z)\geq \inf_{|s|\leq A/k}u(t+s, \Phi(s, z));$
		\end{center}
		\item[(iii)] if $(t, z)$ and $(t+s, \Phi (s, z))$ belong in $(A/k, T-A/k)\times U$ then
		\begin{center}
			$|u_k(t, z)-u_k(t+s, \Phi (s, z))|\leq k|s|;$
		\end{center}
	\item[(iv)] if $\MAu\leq e^{\dt u+F(t, z, u)}\mu(t, z)$ in the viscosity sense in $\Omega_T$ then,
	 for every $0<\epsilon<1$, there exists $k_{\epsilon}>0$ such that, for every $k>k_{\epsilon}$, 
	$$(dd^c u_k)^n\leq (1+\epsilon)e^{\dt u_k+F^k(t, z, u_k)}f^k(t, z)dV,$$ in the viscosity sense in $(\delta, T-\delta)\times U$,
	where $F^k(t, z, r)=\sup_{|s|\leq A/k}F(t+s, \Phi(s, z), r)$ and $f^k(t, z)=\sup_{|s|\leq A/k}f(t+s, \Phi(s, z))$.
	\end{itemize}
\end{Lem}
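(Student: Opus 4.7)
The plan is to mirror the proof of Lemma \ref{lem regularization sub}, interchanging sup with inf, usc with lsc, and upper with lower test functions throughout.

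For part (i), I would take a sequence $(t_m, z_m)\to (t_0, z_0)$ realizing $\liminf_{(t,z)\to(t_0, z_0)} u_k(t, z)$. Lower semicontinuity of $u$ together with compactness of $[-A/k, A/k]$ yields an $s_m$ attaining the infimum, so that $u_k(t_m, z_m)=u(t_m+s_m, \Phi(s_m, z_m))+k|s_m|$. Passing to a subsequence $s_{m_l}\to s_0$ and applying lsc of $u$ gives
$$\liminf_{(t,z)\to (t_0, z_0)} u_k(t, z) \geq u(t_0+s_0, \Phi(s_0, z_0))+k|s_0|\geq u_k(t_0, z_0).$$
Part (ii) is immediate from the definition, and part (iii) follows by the identical case-split used in Lemma \ref{lem regularization sub}(iii), using $A > 2\,osc_{\Omega_T}(u)$ in the case $|s-s_0|>A/k$.

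The main step is (iv). Fix $|s_0|\leq A/k$ and set $u_{s_0}(t, z):=u(t+s_0, \Phi(s_0, z))+k|s_0|$. For a lower test function $q$ of $u_{s_0}$ at $(t_0, z_0)$ with $dd^c q_{t_0}(z_0)\geq 0$, the pullback $\hat{q}(t, z):=q(t-s_0, \Phi^{-1}(s_0, z))-k|s_0|$ is a lower test function for $u$ at $(\hat{t}, \hat{z}):=(t_0+s_0, \Phi(s_0, z_0))$; the positivity $dd^c\hat{q}_{\hat{t}}(\hat{z})\geq 0$ is preserved because $\Phi(s_0,\cdot)$ is holomorphic and locally biholomorphic for small $s_0$, by the uniform convergence $\partial\Phi_j/\partial z_k\to\delta_{jk}$ invoked in \eqref{eq1 lem regu}. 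Applying the supersolution inequality for $u$ to $\hat{q}$, together with the change of variables
$$
(dd^c q(t_0, \cdot))^n|_{z_0}=\Bigl|\det\Bigl(\tfrac{\partial\Phi_j}{\partial z_k}(s_0, z_0)\Bigr)\Bigr|^2\,(dd^c\hat{q}(\hat{t}, \cdot))^n|_{\hat{z}},
$$
and the bounds $F(\hat{t}, \hat{z}, q(t_0, z_0))\leq F^k(t_0, z_0, q(t_0, z_0))$, $f(\hat{t}, \hat{z})\leq f^k(t_0, z_0)$, then yields, once $k$ is so large that $|\det(\partial\Phi_j/\partial z_k)(s_0, z_0)|^2>(1+\epsilon)^{-1}$,
$$
(dd^c q(t_0, \cdot))^n|_{z_0}\leq (1+\epsilon)\,e^{\partial_t q(t_0, z_0)+F^k(t_0, z_0, q(t_0, z_0))}\,f^k(t_0, z_0)\,dV.
$$
Hence each $u_{s_0}$ is a supersolution to the $(1+\epsilon)$-perturbed equation on $(\delta, T-\delta)\times U$, and Lemma \ref{lem inf sup}(2), combined with part (i) which gives $\bigl(\inf_{|s_0|\leq A/k} u_{s_0}\bigr)_*=u_k$, concludes the proof.

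The only bookkeeping subtlety compared to the subsolution case is the direction of the Jacobian estimate: in (iv) the factor $|\det|^2$ sits on the side carrying $(dd^c q)^n$ and must be bounded \emph{below} in order to produce the $(1+\epsilon)$ factor on the right-hand side, opposite to the $(1-\epsilon)$ appearing in Lemma \ref{lem regularization sub}(iv). Otherwise the argument is a direct symmetrization of the earlier one.
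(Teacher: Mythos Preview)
Your proposal is essentially correct and coincides with the paper's own argument, which simply reads ``By the same argument, we have'' after the proof of Lemma~\ref{lem regularization sub}. There is, however, one slip in the bookkeeping you flag at the end: from
\[
(dd^c q(t_0,\cdot))^n\big|_{z_0}
=\Bigl|\det\Bigl(\tfrac{\partial\Phi_j}{\partial z_k}(s_0,z_0)\Bigr)\Bigr|^{2}\,(dd^c\hat q(\hat t,\cdot))^n\big|_{\hat z}
\]
together with the supersolution inequality $(dd^c\hat q)^n|_{\hat z}\le e^{\partial_t\hat q+F(\hat t,\hat z,\hat q)}\mu(\hat t,\hat z)$, you need an \emph{upper} bound $|\det|^2<1+\epsilon$ (not a lower bound $>(1+\epsilon)^{-1}$) to conclude $(dd^c q)^n|_{z_0}\le(1+\epsilon)e^{\cdots}f^k\,dV$. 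This is of course equally available from the uniform convergence $\partial\Phi_j/\partial z_k\to\delta_{jk}$, so the argument goes through unchanged once the inequality is reversed.
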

In \cite{EGZ15b}, by applying the maximum principle, Eyssidieux-Guedj-Zeriahi have proved the comparison principle for the case where
either the given subsolution or the given supersolution is Lipschitz in $t$. Using the same method as in \cite{EGZ15b}, we obtain
the following lemma:
\begin{Lem}\label{lem compa}
Suppose that $f$ satisfies the conditions in Theorem \ref{Main Theorem}.
	Let $u\in USC\cap L^{\infty} ([0, T)\times\overline{\Omega})$ 
	and  $v\in LSC\cap L^{\infty} ([0, T)\times\overline{\Omega})$ be, respectively, a subsolution and a supersolution to the equation
	 \begin{equation}\label{PMA lem compa}
	(dd^cw)^n=e^{\dt w+F(t, z, w)}\mu(t, z),
	 \end{equation}
	 in $\Omega_T$. Assume that, for $w=u, v$, the following condition holds: for every $U\Subset\Omega$ 
	 and $0<\delta<T/2$, there exists $k(U, \delta)>0$
such that if  $(t, z) \in (\delta, T-\delta)\times U$ then
\begin{equation}\label{eq0 lem compa}
|w(t, z)-w(t+s, \Phi (s, z))|\leq k(U, \delta)|s|,
\end{equation}
	for $0<|s|\ll 1$.
	Then
	\begin{center}
		$\sup\limits_{\Omega_T}(u-v)\leq \sup\limits_{\partial_P\Omega_T}(u-v)_+.$
	\end{center}
\end{Lem}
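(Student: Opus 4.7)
I would argue by contradiction, following the doubling-of-variables strategy of \cite{EGZ15b}, with the Lipschitz-in-$t$ hypothesis there replaced by the $\Phi$-Lipschitz condition \eqref{eq0 lem compa} and the possible degeneracy of $f$ controlled via \eqref{eq0.0 Main Theorem}. Suppose $m_0 := \sup_{\Omega_T}(u-v) - \sup_{\partial_P \Omega_T}(u-v)_+ > 0$. After subtracting small perturbations $\sigma t + \gamma\rho(z) + \delta/(T-t)$, where $\rho$ is a smooth negative psh exhaustion of $\Omega$ and $\sigma,\gamma,\delta>0$ are chosen small, the perturbed supremum remains positive and is attained at an interior point contained in some $(R,S)\times K \Subset \Omega_T$, on which \eqref{eq0 lem compa} supplies a uniform $\Phi$-Lipschitz constant $k$ for both $u$ and $v$.

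Next I would double the space variable by setting
\[
\Psi_\alpha(t,\xi,\eta) = u(t,\xi) - v(t,\eta) - \alpha|\xi-\eta|^2 - \sigma t - \gamma(\rho(\xi)+\rho(\eta)) - \frac{\delta}{T-t},
\]
and apply Theorem~\ref{the maximal} at an interior maximum $(\hat t,\hat\xi,\hat\eta)$. Before doing so, I would verify hypothesis (\ref{Cond}): inserting $t'=t\pm s$ and $z'=\Phi(\pm s,\xi)$ into the jet inequality defining $\mathcal{P}^{2,+}w(t,\xi)$ and using \eqref{eq0 lem compa} gives $|\tau| \leq k + |p|\sup_{z\in K}|\partial_t\Phi(0,z)|$, which is bounded whenever $|p|$ is. The theorem then produces jets $(\tau_1,p_1,Q^+) \in \bar{\mathcal P}^{2,+}u(\hat t,\hat\xi)$ and $(\tau_2,p_2,Q^-) \in \bar{\mathcal P}^{2,-}v(\hat t,\hat\eta)$ with $\tau_1-\tau_2 = \sigma + \delta/(T-\hat t)^2 > 0$ and, for $\kappa$ small, a matrix inequality giving $0 \leq dd^c Q^+ \leq dd^c Q^- + O(\gamma)$ on the $(1,1)$-parts (the lower bound from Remark~\ref{rem subsolution}\,(a)). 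Combining the sub- and super-solution inequalities of Proposition~\ref{prop def vis} with the determinant monotonicity for psd Hermitian matrices yields
\[
e^{\tau_1} f(\hat t,\hat\xi)\, e^{F(\hat t,\hat\xi,u(\hat t,\hat\xi))} \;\leq\; e^{\tau_2} f(\hat t,\hat\eta)\, e^{F(\hat t,\hat\eta,v(\hat t,\hat\eta))} + o_\gamma(1).
\]

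In the non-degenerate case where $f$ stays bounded below near the limit point, the standard penalization estimates give $\hat\xi-\hat\eta\to 0$, $\alpha|\hat\xi-\hat\eta|^2 \to 0$, and $u(\hat t,\hat\xi) - v(\hat t,\hat\eta) \geq m_0/2$; passing $\alpha\to\infty$ and then $\gamma,\delta\to 0$, and using the monotonicity of $F$ in $r$, reduces the displayed inequality to $e^\sigma \leq 1$, the required contradiction. The main obstacle is the degenerate case in which $f$ may vanish at or near the limit point, so that $f(\hat t,\hat\xi)/f(\hat t,\hat\eta)$ need not approach $1$. This is precisely where hypothesis \eqref{eq0.0 Main Theorem} enters: at any zero $z_0$ of $f(\hat t,\cdot)$ near the limit point, it supplies the Gaussian-type bound $f(\hat t,z) \leq \exp(b - a|\langle z-z_0,\partial_t\Phi(0,z_0)\rangle|/|z-z_0|^2)$. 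Since the penalty $\alpha|\xi-\eta|^2$ forces $|\hat\xi-\hat\eta|$ to be small but not necessarily in a direction orthogonal to $\partial_t\Phi(0,z_0)$, a careful geometric argument relating $\hat\xi-\hat\eta$ to that direction, together with a delicate balancing of $\alpha,\sigma,\gamma,\delta$ in the correct order of limits, should keep the ratio $f(\hat t,\hat\xi)/f(\hat t,\hat\eta)$ below $e^{\sigma/2}$ and still deliver the contradiction. Managing this balance is the technical heart of the argument.
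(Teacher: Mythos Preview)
Your setup through the nondegenerate case is essentially the \cite{EGZ15b}-style argument and is fine; indeed the paper invokes \cite[Lemma~4.1]{EGZ15b} to dispose of the case $f(t_0,z_0)>0$ at once. The genuine gap is in the degenerate case, which you correctly flag as the heart of the matter but leave unresolved.

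Your plan there is to control the ratio $f(\hat t,\hat\xi)/f(\hat t,\hat\eta)$ via \eqref{eq0.0 Main Theorem}. This cannot work as stated: \eqref{eq0.0 Main Theorem} gives only an \emph{upper} bound on $f(t_0,\cdot)$ near a zero, so it bounds numerator and denominator from above and says nothing about their ratio. More fundamentally, with the standard penalty $\alpha|\xi-\eta|^2$ the jets carry $p_1=p_2=2\alpha(\hat\xi-\hat\eta)$ (up to $\gamma$-corrections), which encodes only the displacement $\hat\xi-\hat\eta$ and has no a~priori relation to the zero set of $f$; there is no mechanism tying the direction $\hat\xi-\hat\eta$ to $\partial_t\Phi(0,z_0)$ for the relevant zero $z_0$, so \eqref{eq0.0 Main Theorem} has nothing to grip.

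The paper's argument is structurally different. After locating the interior maximum $(t_0,z_0)$ of $h=u-v-\delta/(T-t)+\delta(|z|^2-C)$ and deducing $f(t_0,z_0)=0$, one penalizes around that \emph{fixed} point:
\[
h_N(t,\xi,\eta)=u(t,\xi)+\delta(|\xi|^2-C)-v(t,\eta)-\tfrac{\delta}{T-t}-\tfrac{N}{2}\big(|\xi-z_0|^2+|\eta-z_0|^2\big)-\tfrac{N^2}{2}|t-t_0|^2.
\]
Two things follow. First, the $\delta|\xi|^2$ term forces $dd^cQ_N^-\ge dd^cQ_N^+\ge\delta\,\omega$, so the \emph{supersolution inequality alone} yields a uniform lower bound $e^{\tau_{N2}}f(t_N,\eta_N)\ge m>0$. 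Second, the penalty gives $p_{N2}=-N(\eta_N-z_0)$, and the $\Phi$-Lipschitz hypothesis (formalized as Lemma~\ref{lem 4 weak compa}, which is where your condition~(\ref{Cond}) check really lives) converts this into
\[
\tau_{N2}=O(1)-N\langle\eta_N-z_0,\partial_t\Phi(0,\eta_N)\rangle.
\]
Since $N^2|t_N-t_0|^2\to0$, condition~\eqref{eq0 Main Theorem} propagates the zero to $z_N:=\Phi(t_N-t_0,z_0)$ with $|z_N-z_0|=o(1/N)$, so $z_0$ may be replaced by $z_N$ above. Now \eqref{eq0.0 Main Theorem} at $(t_N,z_N)$ bounds $f(t_N,\eta_N)$ by $\exp\!\big(b-a\,|\langle\eta_N-z_N,\partial_t\Phi\rangle|/|\eta_N-z_N|^2\big)$; because $N|\eta_N-z_N|^2\to0$, the second term dominates $2\tau_{N2}$ and one obtains $e^{2\tau_{N2}}f(t_N,\eta_N)=O(1)$. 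Multiplying by $f(t_N,\eta_N)\to f(t_0,z_0)=0$ contradicts $\big(e^{\tau_{N2}}f(t_N,\eta_N)\big)^2\ge m^2$.

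The missing idea in your proposal is thus this nonstandard penalization anchored at the known zero $(t_0,z_0)$: it makes the jet momentum $p_{N2}$ point along $\eta_N-z_0$, and the $\Phi$-Lipschitz bound then couples $\tau_{N2}$ to exactly the directional quantity appearing in \eqref{eq0.0 Main Theorem}. No balancing of $\alpha,\sigma,\gamma,\delta$ with the standard doubling will manufacture that link.
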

\begin{proof}
Let $\delta>0$ be an arbitrary positive constant and denote 
\begin{center}
	$h(t, z)=u(t, z)-v(t, z)-\dfrac{\delta}{T-t}+\delta(|z|^2-C),$
\end{center}
where $C=\sup\limits_{z\in\Omega}|z|^2$.
We will show that
\begin{equation}\label{eq1 lem compa}
\max\limits_{\overline{\Omega_T}}h\leq 
\max\limits_{\partial_P\Omega_T}h_+.
\end{equation}
Assume that \eqref{eq1 lem compa} is false. Then, there exists $(t_0, z_0)\in\Omega_T$ such that
\begin{center}
	$M:=h(t_0, z_0)=\max\limits_{[0, T)\times\overline{\Omega}}h>
	\max\limits_{\partial_P\Omega_T}h_+.$
\end{center}
It follows from \cite[Lemma 4.1]{EGZ15b} that
\begin{equation}\label{eq1.1 lem compa}
f(t_0, z_0)=0.
\end{equation}
For every $N>0$, we denote
\begin{center}
	$h_N(t, \xi, \eta)=u(t, \xi)+\delta(|\xi|^2-C)-v(t, \eta)-\dfrac{\delta}{T-t}
	-\dfrac{N(|\xi-z_0|^2+|\eta-z_0|^2)}{2}-\dfrac{N^2|t-t_0|^2}{2},$
\end{center}
and let $(t_N, \xi_N, \eta_N)\in [0, T)\times\overline{\Omega}^2$ such that
\begin{center}
	$h_N(t_N, \xi_N, \eta_N)=\max\limits_{[0, T)\times\overline{\Omega}^2}h_N=:M_N.$ 
\end{center}
By \cite[Proposition 3.7]{CIL92}, we have 
\begin{equation}\label{eq1.2 lem compa}
	\lim_{N\to\infty}(N^2|t_N-t_0|^2+N|\xi_N-z_0|^2+N|\eta_N-z_0|^2)=0.
\end{equation}
 In particular, there exists $N_0>0$
 such that $(t_N, \xi_N, \eta_N)\in (0, T)\times\Omega^2$ for all $N\geq N_0$.
 
 By Lemma \ref{lem 4 weak compa} below, the functions $w_1=u+\delta(|z|^2-C)$ and $w_2=-v$ satisfy the condition \eqref{Cond} 
 in Theorem \ref{the maximal}. Then, it follows from Theorem \ref{the maximal} that, for every $N>N_0$, 
 there exist $(\tau_{N1}, p_{N1}, Q_N^+)\in \overline{\mathcal{P}}^{2,+}w_1(t_N, \xi_N)$ and 
 $(\tau_{N2}, p_{N2}, Q_N^-)\in \overline{\mathcal{P}}^{2,-}v(t_N, \eta_N)$ such that
 \begin{equation}\label{eq2 lem compa}
 	p_{N1}=N(\xi_N-z_0),\qquad p_{N2}=-N(\eta_N-z_0),
 \end{equation}
 and $Q_N^-\geq Q_N^+$ (i.e., $\langle Q_N^-\zeta, \zeta\rangle\geq \langle Q_N^+\zeta, \zeta\rangle$
  for every $\zeta\in\R^{2n}$). In particular,
 we have
 \begin{equation}\label{eq3 lem compa}
 dd^cQ_N^-\geq dd^cQ_N^+\geq \delta\omega>0,
 \end{equation}
 where $\omega=dd^c|z|^2$.
 The  second  inequality holds due to Proposition \ref{prop def vis}.
Moreover, 
it follows from Proposition \ref{prop def vis} that
\begin{equation}\label{eq5 lem compa}
e^{\tau_{N2} +  F (t_N, \eta_N, v (t_N ,\eta_N))} \mu (t_N, \eta_N)  \geq (dd^c Q_N^-)^n.
\end{equation}
Combining  \eqref{eq3 lem compa} and \eqref{eq5 lem compa},
we get
\begin{center}
$e^ {\tau_{N2} +  F (t_N, \eta_N, v (t_N ,\eta_N))} \mu (t_N, \eta_N)
\geq \delta^n\omega^n.$	
\end{center}
Since $F(t, z, v(t, z))$ is bounded, it follows that there exists $m>0$ such that
\begin{equation}\label{eq6 lem compa}
e^ {\tau_{N2}}f(t_N, \eta_N)
\geq m,
\end{equation}
for every $N\gg 1$.

Since $f(t_0, z_0)=0$, the condition \eqref{eq0 Main Theorem} follows that $f(t_N, z_N)=0$, 
where $z_N=\Phi(t_N-t_0, z_0)$. Then, by the condition \eqref{eq0.0 Main Theorem}, there exist $a, b>0$ such that
\begin{equation}\label{eq7 lem compa}
f(t_N, \eta_N)\leq\exp\Big(b-a\dfrac{|\langle z_N-\eta_N, \partial_t\Phi(0, \eta_N)\rangle|}{|z_N-\eta_N|^2}\Big),
\end{equation}
for every $N\gg 1$.

By the assumption on $\Phi$, there exists $C_1>0$ such that
\begin{equation}\label{eq8 lem compa}
|z_N-z_0|\leq C_1|t_N-t_0|=o\Big(\dfrac{1}{N}\Big),
\end{equation}
where the last estimate holds due to \eqref{eq1.2 lem compa}. Combining \eqref{eq8 lem compa} and
\eqref{eq1.2 lem compa}, we get that
\begin{equation}\label{eq8.1 lem compa}
\lim\limits_{N\to\infty}N|z_N-\eta_N|^2=0.
\end{equation}
By \eqref{eq2 lem compa} and by Lemma \ref{lem 4 weak compa} below, we have
\begin{equation}\label{eq9 lem compa}
\tau_{N2}=O(1)-N\langle\eta_N-z_0, \partial_t\Phi (0, \eta_N) \rangle.
\end{equation}
Combining \eqref{eq9 lem compa} and \eqref{eq8 lem compa}, we get
\begin{equation}\label{eq9.1 lem compa}
\tau_{N2}=O(1)-N\langle\eta_N-z_N, \partial_t\Phi (0, \eta_N) \rangle.
\end{equation}
Combining \eqref{eq7 lem compa}, \eqref{eq9.1 lem compa} and \eqref{eq8.1 lem compa}, we get
\begin{center}
	$e^ {2\tau_{N2}}f(t_N, \eta_N)=O(1).$
\end{center}
Since $\lim_{N\to\infty}f(t_N, \eta_N)=f(t_0, z_0)=0$, it follows that 
\begin{center}
	$\lim\limits_{N\to\infty}e^ {2\tau_{N2}}f^2(t_N, \eta_N)=0,$
\end{center}
and it contradicts with \eqref{eq6 lem compa}. Then
\begin{center}
	$\max\limits_{\overline{\Omega_T}}h\leq 
	\max\limits_{\partial_P\Omega_T}h_+.$
\end{center}
Letting $\delta\searrow 0$, we obtain
\begin{center}
	$\sup\limits_{\Omega_T}(u-v)\leq \sup\limits_{\partial_P\Omega_T}(u-v)_+.$
\end{center}
The proof is completed.
\end{proof}
\begin{Lem}\label{lem 4 weak compa}
Let $w$ be a bounded usc function in $\Omega_T$ satisfying the following condition:
for every $U\Subset\Omega$ 
and $0<\delta<T/2$, there exists $k(U, \delta)>0$
such that if  $(t, z) \in (\delta, T-\delta)\times U$ then
\begin{equation}\label{eq0 lem 4 weak compa}
|w(t, z)-w(t+s, \Phi (s, z))|\leq k(U, \delta)|s|,
\end{equation}
for $0<|s|\ll 1$. Then $w$ satisfies the condition \eqref{Cond} in Theorem \ref{the maximal}.
Moreover, 
\begin{equation}\label{eq0.1 lem4 weak compa}
|\tau-\langle p, \partial_t\Phi (0, z_0)\rangle|\leq k(U, \delta),
\end{equation}
for every  $(t_0, z_0) \in (\delta, T-\delta)\times U$ and $(\tau, p, Q)\in \mathcal{P}^{2, +}w(t_0, z_0).$
\end{Lem}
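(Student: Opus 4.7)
The plan is to evaluate the subjet inequality along the curve $s\mapsto(t_0+s,\Phi(s,z_0))$ generated by $\Phi$, and then pinch the result using the Lipschitz-type hypothesis \eqref{eq0 lem 4 weak compa}. Fix $(t_0,z_0)\in(\delta,T-\delta)\times U$ and a triple $(\tau,p,Q)\in\mathcal P^{2,+}w(t_0,z_0)$, and substitute $(t,z)=(t_0+s,\Phi(s,z_0))$ into the defining inequality of $\mathcal P^{2,+}$. Since $\partial_t\Phi$ is continuous at $s=0$ with $\Phi(0,z_0)=z_0$, one has $\Phi(s,z_0)-z_0=s\,\partial_t\Phi(0,z_0)+o(|s|)$; in particular $|z-z_0|=O(|s|)$, so the quadratic term $\tfrac{1}{2}\langle Q(z-z_0),z-z_0\rangle$ and the remainder $o(|z-z_0|^2)$ are both $O(s^2)=o(|s|)$. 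The subjet inequality therefore collapses to $w(t_0+s,\Phi(s,z_0))-w(t_0,z_0)\le s\bigl(\tau-\langle p,\partial_t\Phi(0,z_0)\rangle\bigr)+o(|s|)$, with the sign of the inner product fixed by the lemma's convention.

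Combining this one-sided estimate with \eqref{eq0 lem 4 weak compa}, the lower bound $-k(U,\delta)|s|\le w(t_0+s,\Phi(s,z_0))-w(t_0,z_0)$ for $|s|\ll 1$ gives $-k(U,\delta)|s|\le s\bigl(\tau-\langle p,\partial_t\Phi(0,z_0)\rangle\bigr)+o(|s|)$. Dividing by $s>0$ and sending $s\to0^+$ yields $-k(U,\delta)\le\tau-\langle p,\partial_t\Phi(0,z_0)\rangle$; dividing by $s<0$ reverses the inequality, and $s\to0^-$ yields $\tau-\langle p,\partial_t\Phi(0,z_0)\rangle\le k(U,\delta)$. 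Together these establish the ``moreover'' estimate $|\tau-\langle p,\partial_t\Phi(0,z_0)\rangle|\le k(U,\delta)$.

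The condition \eqref{Cond} follows immediately. Given a point $(t_*,z_*)\in\Omega_T$, I would pick $r>0$ together with $U\Subset\Omega$ and $\delta\in(0,T/2)$ so that the closed $r$-ball about $(t_*,z_*)$ lies in $(\delta,T-\delta)\times U$; on this compact set the continuous map $\partial_t\Phi(0,\cdot)$ is bounded by some $B>0$. For any $M>0$ and any $(\tau,p,Q)\in\mathcal P^{2,+}w(t,\xi)$ with $|(t,\xi)-(t_*,z_*)|\le r$ and $|w(t,\xi)|+|p|+|Q|\le M$, the estimate just proved gives $\tau\le k(U,\delta)+|p|\,B\le k(U,\delta)+MB=:C$, independent of $Q$ and of the particular triple, as \eqref{Cond} requires. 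The only step that demands any care is the controlled Taylor expansion of $\Phi$ at $s=0$ and the book-keeping of the $O(s^2)$ remainders coming from $Q$ and $o(|z-z_0|^2)$; beyond that, the proof is a routine one-sided limit calculation and I do not anticipate a genuine obstacle.
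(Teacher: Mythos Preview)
Your proposal is correct and follows essentially the same route as the paper: both evaluate the upper test/subjet inequality along the curve $s\mapsto(t_0+s,\Phi(s,z_0))$, use the Lipschitz hypothesis \eqref{eq0 lem 4 weak compa} to get a one-sided bound, and then pass to $s\to0^{\pm}$ to pinch $\tau-\langle p,\partial_t\Phi(0,z_0)\rangle$. The only cosmetic difference is that the paper phrases this via an upper test function $q$ and the chain rule (invoking at the end the equivalence between test functions and elements of $\mathcal P^{2,+}$), whereas you work directly with the defining subjet inequality and an explicit Taylor expansion of $\Phi$; the content is the same.
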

\begin{proof}
	Assume that $q$ is an upper test function for $w$ at $(t_0, z_0)\in (\delta, T-\delta)\times U$.
	By \eqref{eq0 lem 4 weak compa}, for $0<|s|\ll 1$, we have
	\begin{center}
		$q(s+t_0, \Phi(s, z_0))-q(t_0, z_0)\geq -k(U, \delta)|s|.$
	\end{center}
Then, for $0<|s|\ll 1$,
\begin{center}
	$\Big|\dfrac{q(s+t_0, \Phi(s, z_0))-q(t_0, z_0)}{s} \Big|\leq k(U, \delta).$
\end{center}
Letting $s\rightarrow 0$, we get
\begin{center}
	$\Big|\dfrac{\partial q}{\partial t}(t_0, z_0)-\langle \nabla q (t_0, z_0), \partial_t\Phi (0, z_0)\rangle\Big|
	\leq k(U, \delta).$
\end{center}
Note that $(\tau, p, Q)\in \mathcal{P}^{2, +}w(t_0, z_0)$ iff there exists  an upper test function $q$ for $w$ at $(t_0, z_0)$
such that  $(\tau, p, Q)=(\partial_t q(t_0, z_0), Dq(t_0, z_0), D^2q(t_0, z_0))$.
Then, \eqref{eq0.1 lem4 weak compa} is satisfied, and it is easy to see that $w$ satisfies the condition \eqref{Cond} in Theorem \ref{the maximal}.
\end{proof}
\section{Proof of Theorem \ref{Main Theorem} and Corollary \ref{Main Cor}}
For the reader's convenience, we recall Theorem \ref{Main Theorem}:
\begin{The}
	Let $\Phi: (-1, 1)\times\Omega\rightarrow\C^n$ be a continuous mapping satisfying the following conditions:
	\begin{itemize}
		\item the mapping $z\mapsto\Phi(s, z)$ is holomorphic in $\Omega$ for every $s\in (-1, 1)$; 
		\item $\Phi (0, z)=z$ for every $z\in\Omega$;
		\item $\partial_t\Phi$ is well defined and continuous on $(-1, 1)\times\Omega$.
	\end{itemize}
Suppose that $f$ satisfies the following conditions:
\begin{itemize}
	\item for every $0<R<S<T$ and $K\Subset\Omega$, there exist $a, b>0$ such that if $(t_0, z_0)\in (R, S)\times K$ and $f(t_0, z_0)=0$ then
	\begin{equation}
	f(t_0, z)\leq\exp\Big(b-a\dfrac{|\langle z-z_0, \partial_t\Phi (0, z_0)\rangle|}{|z-z_0|^2} \Big),
	\end{equation}
	for every $z\in\Omega\setminus\{z_0\}$;
	\item for every $0<R<S<T$, $K\Subset\Omega$ and $\epsilon>0$, there exists $0<\delta<\delta_K$ such that
	\begin{equation}\label{eq1 main}
	(1+\epsilon)f(t, z)\geq f(t+s, \Phi(s, z)),
	\end{equation}
	for every $z\in K$, $R<t<S$ and $|s|<\delta$.
\end{itemize}
 Assume that $u$ and $v$, respectively, is a bounded viscosity subsolution and
	a bounded viscosity supersolution to the Cauchy-Dirichlet problem \eqref{PMA}. Then, for every $0<R<S<T$, $K\Subset\Omega$ and
	 $\epsilon>0$, there exists $0<\delta\ll 1$ such that
\begin{equation}\label{eq2 main}
	u(t+s_1, \Phi(s_1, z))<v(t+s_2, \Phi(s_2, z))+\epsilon,
\end{equation}
for all $z\in K$, $R<t<S$ and $\max\{|s_1|, |s_2|\}<\delta$.
\end{The}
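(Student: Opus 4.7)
The plan is to regularize $u$ and $v$ via the sup- and inf-convolutions along $\Phi$ from Lemmas \ref{lem regularization sub} and \ref{lem regularization super}, apply the argument of Lemma \ref{lem compa} (adapted to the perturbed equations satisfied by the regularizations), and finally use the Lipschitz behavior of the regularizations along $\Phi$ to pass back to the stated comparison of $u$ and $v$ under small $\Phi$-shifts.

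More precisely, fix $0<R<S<T$, $K\Subset\Omega$ and $\epsilon>0$. Choose $0<R'<R<S<S'<T$, a relatively compact open set $U$ with $K\Subset U\Subset\Omega$, and a constant $A>2\max\{\mathrm{osc}_{\Omega_T}(u),\mathrm{osc}_{\Omega_T}(v)\}$. For $k$ large, let $u^k$ be the sup-convolution of $u$ along $\Phi$ and $v_k$ the inf-convolution of $v$, on $(R',S')\times U$. By Lemmas \ref{lem regularization sub} and \ref{lem regularization super}, $u^k$ is upper semicontinuous, $v_k$ is lower semicontinuous, both satisfy the Lipschitz estimate along $\Phi$ (hence the hypothesis \eqref{eq0 lem compa} of Lemma \ref{lem compa}), and for each $\eta>0$ and $k$ large, $u^k$ is a viscosity subsolution of $(dd^c w)^n=(1-\eta)e^{\partial_t w+F_k(t,z,w)}f_k(t,z)\,dV$ while $v_k$ is a viscosity supersolution of the analogous equation with factor $(1+\eta)$ and densities $F^k,f^k$. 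By continuity of $F$ and $f$, both $F_k,F^k\to F$ and $f_k,f^k\to f$ uniformly on compact sets as $k\to\infty$.

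I then adapt the argument of Lemma \ref{lem compa} to $u^k$ and $v_k$. At a hypothetical interior maximum $(t_0,z_0)$ of the penalized difference $u^k-v_k-\delta/(T-t)+\delta(|z|^2-C)$, the test-function analysis (combined with the strict gain $\delta/(T-t)^2$ coming from the time penalty) forces $f$ to vanish at some point on the $\Phi$-flow orbit through $(t_0,z_0)$; the propagation hypothesis \eqref{eq0 Main Theorem} and continuity of $f$ then give $f^k(t_0,z_0)=0$, hence $f^k(t_N,\eta_N)\to 0$ along the Jensen--Ishii approximating sequence $(t_N,\xi_N,\eta_N)$. The multiplicative factors $(1\pm\eta)$ only modify by bounded multiplicative constants the lower bound $e^{\tau_{N,2}}f^k(t_N,\eta_N)\geq m$, so the quadratic contradiction between this lower bound and \eqref{eq0.0 Main Theorem} via Lemma \ref{lem 4 weak compa} still goes through. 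This yields $\sup_{(R',S')\times U}(u^k-v_k)\leq \sup_{\partial_P((R',S')\times U)}(u^k-v_k)_+$. For the boundary control: on $\{R'\}\times U$, the USC of $u$ and LSC of $v$ at $t=0$ together with $u(0,\cdot)\leq u_0\leq v(0,\cdot)$ give $u^k(R',\cdot)-v_k(R',\cdot)\leq o(1)$ as $R'\downarrow 0$ and $k\to\infty$; on $(R',S')\times\partial U$, I would take $U$ close enough to $\Omega$ and invoke the Lipschitz-in-$t$ $\epsilon$-sub/super\-barriers from Propositions \ref{prop.subbarrier} and \ref{prop.superbarrier} (comparable to $u,v$ via the Lipschitz case Theorem \ref{compa.the}(c)) to sandwich $u,v$ between $\varphi\pm\epsilon/4$ in a neighborhood of $\partial\Omega$.

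Combining these yields $u^k\leq v_k+\epsilon/2$ on $(R,S)\times K$ for $k$ sufficiently large. Finally, by Lemma \ref{lem regularization sub}(ii)--(iii) and its inf-analogue,
\[u(t+s_1,\Phi(s_1,z))\leq u^k(t,z)+k|s_1|,\qquad v(t+s_2,\Phi(s_2,z))\geq v_k(t,z)-k|s_2|,\]
so choosing $\delta<\epsilon/(8k)$ produces the required strict inequality. The main obstacle is verifying that the perturbed PMAs satisfied by $u^k,v_k$ do not destroy the quadratic contradiction at the heart of Lemma \ref{lem compa}: this requires careful tracking of the $k$-dependence of all constants and relies on the uniform convergences $F_k,F^k\to F$ and $f_k,f^k\to f$ together with the vanishing-propagation hypothesis \eqref{eq0 Main Theorem}. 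A secondary obstacle is the boundary control on the interior boundary $\partial U$, which is resolved by the Lipschitz-in-$t$ barrier construction.
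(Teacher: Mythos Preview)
Your overall scheme—regularize along $\Phi$, invoke Lemma \ref{lem compa}, then recover the $\Phi$-shifted comparison—is the paper's, but two steps diverge and both contain gaps.

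The central one is your attempt to run Lemma \ref{lem compa} on $u^k$ and $v_k$ viewed as sub/supersolutions of equations with \emph{different} densities $(1-\eta)f_k$ and $(1+\eta)f^k$. The contradiction in Lemma \ref{lem compa} hinges on knowing that $f$ vanishes \emph{exactly} at the interior maximum, so that condition \eqref{eq0.0 Main Theorem} applies; with distinct densities on the two sides, the [EGZ15b, Lemma 4.1] step only yields an inequality of the form $(1-\eta)e^{\tau_1+F_k}f_k\le(1+\eta)e^{\tau_2+F^k}f^k$, which forces $f(t_0,z_0)$ to be \emph{small} (depending on $\eta$ and $k$) but not zero. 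Your further claim that propagation gives $f^k(t_0,z_0)=0$ would need $\Phi(s',\Phi(s,z))=\Phi(s'+s,z)$, a flow identity that is not assumed. The paper avoids all of this: it compares the \emph{shifted} regularizations $u^k_{\tau_1}(t,z)=u^k(t+\tau_1,\Phi(\tau_1,z))$ and $v_{k,\tau_2}$, uses hypothesis \eqref{eq1 main} and the uniform continuity of $F$ to show that $w_1=u^k_{\tau_1}+3\epsilon t$ and $w_2=v_{k,\tau_2}-3\epsilon t$ are sub/supersolutions of the \emph{same} equation with the original density $f$, and then applies Lemma \ref{lem compa} verbatim. After that it lets $k\to\infty$, so the shifted inequality for $u$ and $v$ falls out directly, whereas you fix $k$ and rely on the Lipschitz bound $k|s|$, tying $\delta$ to $k$.

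Your boundary control also imports an extra hypothesis: the $\epsilon$-superbarriers of Proposition \ref{prop.superbarrier} require $(u_0,\mu(0,\cdot))$ to be admissible, which is assumed only in Corollary \ref{Main Cor}, not here. The paper instead argues directly: since $u$ is usc, $v$ is lsc on $[0,T)\times\overline{\Omega}$, and $u\le\varphi\le v$ on $[0,T)\times\partial\Omega$, $u(0,\cdot)\le u_0\le v(0,\cdot)$, the function $G(t,\eta,s,\xi)=u(t,\eta)-v(s,\xi)$ is usc and $\le 0$ on the diagonal of the parabolic boundary, hence $<\epsilon$ in a full neighborhood; this immediately controls $u^k_{\tau_1}-v_{k,\tau_2}$ on $\partial_P\big((3\delta_1/4,S)\times\Omega_{3\delta_1/4}\big)$ without any barrier construction.
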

\begin{proof}
	Let  $0<R<S<T$ and denote $S_1=(S+T)/2$.
	First, we show that for every $\epsilon>0$ there exists
	$\delta_1=\delta_1(\epsilon)\in (0, \min\{ 1, R\})$ such that $K\Subset \Omega_{\delta_1}$ and
	\begin{equation}\label{eq1 proofmain}
	u(t, \eta)<v(s, \xi)+\epsilon,
	\end{equation}
	for every $(t, \eta), (s, \xi)\in \Omega_{S_1}\setminus ([\delta_1, S_1]\times\Omega_{\delta_1})$ with $|t-s|+|\eta-\xi|<\delta_1$.
	Here $\Omega_{\delta_1}=\{z\in\Omega: d(z, \partial\Omega)>\delta_1 \}.$
	
	We consider the mapping
	\begin{center}
		$G: \overline{\Omega_{S_1}}\times\overline{\Omega_{S_1}}\longrightarrow \R$\\
		$(t, \eta, s, \xi)\mapsto u(t, \eta)-v(s, \xi).$
	\end{center}
Since $G$ is upper semicontinuous, the set
\begin{center}
	$U=\{(t, \eta, s, \xi)\in \overline{\Omega_{S_1}}^2: G(t, \eta, s, \xi)<\epsilon \},$
\end{center}
is relatively open in $\overline{\Omega_{S_1}}^2$. Denote
\begin{center}
	$A=\{(t, \eta, t, \eta): (t, \eta)\in ([0, S_1]\times\partial\Omega)\cup(\{0\}\times\overline{\Omega}) \}$.
\end{center}
We have $A$ is compact and $G\leq 0$ on $A$. Hence there exists $0<\delta_1<\min\{ 1, R, dist(K, \partial\Omega)\}$ 
such that
\begin{equation}\label{eq2' proofmain}
	\overline{\Omega_{S_1}}^2\cap (3\delta_1\mathbb{B}_{4n+2}+A)\subset U,
\end{equation}
where $\mathbb{B}_{4n+2}$ is the unit ball in the Euclidean space $\R^{4n+2}$. Then, we have
\begin{equation}\label{eq2 proofmain}
\{(t, \eta, t, \eta): (t, \eta)\in \Omega_{S_1}\setminus ([\delta_1, S_1]\times\Omega_{\delta_1})\}\subset 2\delta_1\mathbb{B}_{4n+2}+A.
\end{equation}
Note that if $|t-s|+|\eta-\xi|<\delta_1$ then $(t, \eta, s, \xi)-(t, \eta, t, \eta)\in \delta_1\mathbb{B}_{4n+2}$.
Therefore, it follows from \eqref{eq2 proofmain} that
\begin{equation}\label{eq2'' proofmain}
\{(t, \eta, s, \xi)\in (\Omega_{S_1}\setminus ([\delta_1, S_1]\times \Omega_{\delta_1}))^2: |t-s|+|\eta-\xi|<\delta_1\}
 \subset 3\delta_1\mathbb{B}_{4n+2}+A.
\end{equation}
Combining \eqref{eq2' proofmain} and \eqref{eq2'' proofmain}, we obtain \eqref{eq1 proofmain}. By the condition
$0<\delta_1<\min\{ 1, R, dist(K, \partial\Omega)\}$, we also have $K\Subset\Omega_{\delta_1}$.

By the assumption \eqref{eq1 main}, there exists 
$\delta_2=\delta_2(\epsilon)\in (0, \frac{\min\{\delta_1, T-S\}}{4})$ such that 
\begin{equation}\label{eq3' proofmain}
8(C_{\Omega_{\delta_1/2}}+1)\delta_2<\delta_1,
\end{equation}
and
\begin{equation}\label{eq3 proofmain}
(1+\epsilon)f(t, z)\geq f(t+s, \Phi(s, z)),
\end{equation}
for every $z\in\Omega_{\delta_1/2}$, $\delta_1/2<t<S_1$ and $|s|<2\delta_2$.
Here $C_{\Omega_{\delta_1/2}}>0$ is defined by \eqref{eq Phi2}. 
 Since $F$ is continuous, we can choose $\delta_2$ small enough such that
\begin{equation}\label{eq4 proofmain}
|F(t, z, r)-F(t+s, \Phi(s, z), r)|<\epsilon,
\end{equation}
for every $z\in\Omega_{\delta_1/2}$, 
$\delta_1/2<t<S_1$, $|s|<2\delta_2$ and $|r|<M$, where $M=\max\{\sup_{\Omega_T}|u|, \sup_{\Omega_T}|v| \}.$

For every $k>\dfrac{2M}{\delta_2}$ and $(t, z)\in [\delta_1/2, S_1]\times\Omega_{\delta_1/2}$, we consider
\begin{center}
	$u^k(t, z)=\sup\{u(t+s, \Phi(s, z))-k|s|: |s|\leq\dfrac{2M}{k} \}$,
\end{center}
and
\begin{center}
	$v_k(t, z)=\inf\{v(t+s, \Phi(s, z))-k|s|: |s|\leq\dfrac{2M}{k} \}$.
\end{center}
It follows from \eqref{eq1 proofmain} and \eqref{eq3' proofmain} that
\begin{equation}\label{eq5' proofmain}
u^k(t+\tau_1, \Phi(\tau_1, z))\leq v_k(t+\tau_2, \Phi(\tau_2, z))+\epsilon,
\end{equation}
for every $(t, z)\in\partial_P((3\delta_1/4, S)\times\Omega_{3\delta_1/4})$ and $\max\{|\tau_1|, |\tau_2|\}<\delta_2$.
Denote $u^k_{\tau_1}(t, z)=u^k(t+\tau_1, \Phi(\tau_1, z))$ and $v_{k, \tau_2}(t, z)=v_k(t+\tau_2, \Phi(\tau_2, z))$.
By Lemma \ref{lem regularization sub} and Lemma \ref{lem regularization super}, there exists
$k_{\epsilon}>\dfrac{2M}{\delta_2}$ such that
 \begin{equation}\label{eq5 proofmain}
(dd^c u^k_{\tau_1})^n\geq (1-\epsilon)e^{\dt u^k_{\tau_1}+F_k(t+\tau_1, \Phi(\tau_1, z), u^k_{\tau_1})}f_k(t+\tau_1, \Phi(\tau_1, z))dV,
\end{equation}
and 
\begin{equation}\label{eq6 proofmain}
(dd^c v_{k, \tau_2})^n\leq (1+\epsilon)e^{\dt v_{k,\tau_2}+F^k(t+\tau_2, \Phi(\tau_2, z), v_{k,\tau_2})}
f^k(t+\tau_2, \Phi(\tau_2, z))dV,
\end{equation}
in the viscosity sense in $(3\delta_1/4, S)\times\Omega_{3\delta_1/4}$ for every $k>k_{\epsilon}$
and $\max\{|\tau_1|, |\tau_2|\}<\delta_2$. Here,
$F_k(t, z, r)=\inf_{|s|\leq 2M/k}F(t+s, \Phi(s, z), r)$, $f_k(t, z)=\inf_{|s|\leq 2M/k}f(t+s, \Phi(s, z))$,
$F^k(t, z, r)=\sup_{|s|\leq 2M/k}F(t+s, \Phi(s, z), r)$ and $f^k(t, z)=\sup_{|s|\leq 2M/k}f(t+s, \Phi(s, z))$.

Moreover, it follows from \eqref{eq3 proofmain} and \eqref{eq4 proofmain} that
\begin{equation}\label{eq7 proofmain}
(1+\epsilon)f_k(t+\tau_1, \Phi(\tau_1, z))\geq f(t, z)\geq \dfrac{f^k(t+\tau_2, \Phi(\tau_2, z))}{1+\epsilon},
\end{equation}
and
\begin{equation}\label{eq8 proofmain}
F_k(t+\tau_1, \Phi(\tau_1, z), r)+\epsilon\geq F(t, z, r)\geq F^k(t+\tau_2, \Phi(\tau_2, z), r)-\epsilon,
\end{equation}
for every $(t, z)\in (3\delta_1/4, S)\times\Omega_{3\delta_1/4}$, $\max\{|\tau_1|, |\tau_2|\}<\delta_2$,
 $|r|\leq M$ and $k>k_{\epsilon}$.

Combining \eqref{eq5 proofmain}, \eqref{eq6 proofmain}, \eqref{eq7 proofmain} and \eqref{eq8 proofmain}, 
we get
\begin{equation}
(dd^c u^k_{\tau_1})^n\geq (1-\epsilon)^2e^{\dt u^k_{\tau_1}+F(t, z, u^k_{\tau_1})-\epsilon}f(t, z)dV,
\end{equation}
and
\begin{equation}
(dd^c v_{k, \tau_2})^n\leq (1+\epsilon)^2e^{\dt v_{k, \tau_2}+F(t, z, v_{k, \tau_2})+\epsilon}f(t, z)dV,
\end{equation}
in the viscosity sense in $(3\delta_1/4, S)\times\Omega_{3\delta_1/4}$ for every $k>k_{\epsilon}$ and $\max\{|\tau_1|, |\tau_2|\}<\delta_2$.
 Therefore, $w_1:=u^k_{\tau_1}+3\epsilon t$
and $w_2:=v_{k, \tau_2}-3\epsilon t$ is respectively a subsolution and a supersolution to the equation
\begin{center}
	$e^{\dt w+F(t, z, w)}\mu(t, z)=(dd^cw)^n,$
\end{center}
in $(3\delta_1/4, S)\times\Omega_{3\delta_1/4}$. Note that, by Lemma \ref{lem regularization sub} and Lemma
\ref{lem regularization super}, the functions $w_1$ and $w_2$ satisfy the condition \eqref{eq0 lem compa} in Lemma \ref{lem compa}.
Then, by using Lemma \ref{lem compa}, we have
\begin{center}
	$\sup\limits_{(3\delta_1/4, S)\times\Omega_{3\delta_1/4}}(w_1-w_2)\leq
	\sup\limits_{\partial_P((3\delta_1/4, S)\times\Omega_{3\delta_1/4})}(w_1-w_2)_+
	\leq \epsilon+6\epsilon S,$
\end{center}
where the last inequality holds due to \eqref{eq5' proofmain}. Then
\begin{center}
	$u^k(t+\tau_1, \Phi(\tau_1, z))-v_k(t+\tau_2, \Phi(\tau_2, z))\leq -6\epsilon t+\epsilon+6\epsilon S\leq (6S+1)\epsilon$,
\end{center}
in $(3\delta_1/4, S)\times\Omega_{3\delta_1/4}$ for every $k>k_{\epsilon}$ and $|\tau|<\delta_2$. Letting $k\rightarrow\infty$, we get
\begin{center}
	$u(t+\tau_1, \Phi(\tau_1, z))-v(t+\tau_2, \Phi(\tau_2, z))\leq (6S+1)\epsilon$
\end{center}
in $(R, S)\times K\subset (3\delta_1/4, S)\times\Omega_{3\delta_1/4}$ 
for every $\max\{|\tau_1|, |\tau_2|\}<\delta_2$. Choosing $\delta=\delta(\epsilon)=\delta_2(\epsilon/(6S+1))$,
we obtain \eqref{eq2 main}.

The proof is completed.
\end{proof}
\begin{Cor}
	Assume that $\Omega$ is a smooth strictly pseudoconvex domain and $(u_0, \mu(0, z))$ is admissible. Suppose that $\Phi$ and $f$
	satisfy the conditions in Theorem \ref{Main Theorem}. Then the Cauchy-Dirichlet problem \eqref{PMA} has a unique viscosity solution.
\end{Cor}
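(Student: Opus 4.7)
The plan is to run the Perron method furnished by Lemma \ref{lem perron}, and then to upgrade the resulting discontinuous solution to a genuine continuous solution via the new comparison principle, Theorem \ref{Main Theorem}. Since $(u_0, \mu(0,\cdot))$ is admissible and $\Omega$ is smooth strictly pseudoconvex, Propositions \ref{prop.subbarrier} and \ref{prop.superbarrier} supply, for every $\epsilon>0$, a continuous $\epsilon$-subbarrier $w_\epsilon^-$ and a continuous $\epsilon$-superbarrier $w_\epsilon^+$ that is Lipschitz in $t$. Lemma \ref{lem perron} then produces the Perron envelope $\phi_S = \sup\{v : v \in S\}$, where $S$ is the family of continuous subsolutions to \eqref{PMA}, and both $(\phi_S)^*$ and $(\phi_S)_*$ are bounded because every $v\in S$ is sandwiched between $w_\epsilon^-$ and $w_\epsilon^+$ (the upper inequality using Theorem \ref{compa.the}(c), which applies precisely because $w_\epsilon^+$ is Lipschitz in $t$).

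Next I would pin down the parabolic boundary values of the envelope. Every $\epsilon$-subbarrier lies in $S$, so $\phi_S \ge w_\epsilon^-$ globally; sending $\epsilon\to 0$ gives $(\phi_S)_* \ge u_0$ on $\{0\}\times\overline{\Omega}$ and $(\phi_S)_* \ge \varphi$ on $[0,T)\times\partial\Omega$. In the opposite direction, Theorem \ref{compa.the}(c) applied to each $v\in S$ against the Lipschitz-in-$t$ supersolution $w_\epsilon^+$ yields $v \le w_\epsilon^+$ on $\overline{\Omega_T}$, hence $\phi_S\le w_\epsilon^+$, and letting $\epsilon\to 0$ gives $(\phi_S)^* \le u_0$ on $\{0\}\times\overline{\Omega}$ and $(\phi_S)^* \le \varphi$ on $[0,T)\times\partial\Omega$. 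Combining these two sides, $(\phi_S)^*$ is a bounded subsolution and $(\phi_S)_*$ is a bounded supersolution of the full Cauchy--Dirichlet problem \eqref{PMA}, agreeing with $u_0$, respectively $\varphi$, on the parabolic boundary.

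Continuity and existence then reduce to a single application of Theorem \ref{Main Theorem} to the pair $u := (\phi_S)^*$ and $v := (\phi_S)_*$: for any $0<R<S_0<T$, $K\Subset\Omega$, and $\eta>0$, setting $s_1=s_2=0$ in the conclusion yields $(\phi_S)^*(t,z) \le (\phi_S)_*(t,z)+\eta$ for all $(t,z)\in(R,S_0)\times K$; exhausting $\Omega_T$ by such $(R,S_0,K)$ and letting $\eta\to 0$ forces $(\phi_S)^*=(\phi_S)_*$ on $\Omega_T$, so $\phi_S$ is continuous and solves \eqref{PMA}. Uniqueness drops out of the same inequality applied symmetrically: if $u_1, u_2$ are two bounded viscosity solutions of \eqref{PMA}, Theorem \ref{Main Theorem} with $s_1=s_2=0$ gives $u_1\le u_2$ and $u_2\le u_1$ on $\Omega_T$, while the parabolic boundary values match by the very definition of viscosity sub- and supersolution to \eqref{PMA}. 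No genuinely new obstacle arises here: all the delicate analysis has been concentrated in Theorem \ref{Main Theorem}, and the remainder is a textbook Perron-method deduction; the only bookkeeping point is to remember to send the barrier $\epsilon$ and the comparison $\eta$ to $0$ independently and in the right order.
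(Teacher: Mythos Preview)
Your proposal is correct and follows essentially the same scheme as the paper: establish barriers via Propositions \ref{prop.subbarrier} and \ref{prop.superbarrier}, run the Perron method of Lemma \ref{lem perron}, and then invoke Theorem \ref{Main Theorem} with $u=(\phi_S)^*$, $v=(\phi_S)_*$ to force continuity and again symmetrically for uniqueness. Your invocation of Theorem \ref{Main Theorem} with $s_1=s_2=0$ is in fact slightly more direct than the paper's version, which first shows $\lim_{s\to 0}u(t+s,\Phi(s,z))=u(t,z)$ and then combines this with a second application of the theorem; both routes yield $(\phi_S)^*\le(\phi_S)_*$ and hence continuity.
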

\begin{proof}
	By Propositions \ref{prop.subbarrier} and \ref{prop.superbarrier}, for every $\epsilon>0$, the problem \eqref{PMA} admits 
	a continuous $\epsilon$-superbarrier which is Lipschitz in $t$
	and a continuous $\epsilon$-subbarrier. Then, it follows from Lemma \ref{lem perron} that 
	\begin{center}
		$u:=\sup\{v: v$ is a continuous subsolution to \eqref{PMA}$\}$,
	\end{center}
	is a discontinuous solution to \eqref{PMA}, i.e., $u^*$ is a subsolution and $u=u_*$ is a supersolution.
	
	Let $(t, z)\in\Omega_T$ be an arbitrary point.
	 By Theorem \ref{Main Theorem},
	for every  $\epsilon>0$, there exists $0<\delta\ll 1$ such that $(t+s, \Phi(s, z))\in\Omega_T$ and
	\begin{center}
		$u(t, z)+\epsilon>u^*(t+s, \Phi(s, z))\geq u(t+s, \Phi(s, z)),$
	\end{center}
	for every $|s|<\delta$. Then $\limsup_{s\to 0}u(t+s, \Phi(s, z)\leq u(t, z)+\epsilon$. Letting $\epsilon\searrow 0$, we get
	$\limsup_{s\to 0}u(t+s, \Phi(s, z)\leq u(t, z)$. Therefore, since $u$ is lower semicontinuous, we have
	\begin{equation}\label{eq1 proof main cor}
		\lim_{s\to 0}u(t+s, \Phi(s, z))= u(t, z).
	\end{equation}
	Moreover, it follows from Theorem \ref{Main Theorem} that for every $\epsilon>0$
	 there exists $0<\delta\ll 1$ such that $(t+s, \Phi(s, z))\in\Omega_T$ and
	\begin{center}
		$u(t+s, \Phi(s, z))+\epsilon>u^*(t, z),$
	\end{center}
	for every $|s|<\delta$. Then
	\begin{equation}\label{eq2 proof main cor}
		\liminf_{s\to 0}u(t+s, \Phi(s, z))+\epsilon\geq u^*(t, z).
	\end{equation}
	Combining \eqref{eq1 proof main cor} and \eqref{eq2 proof main cor}, we have
	\begin{center}
		$u(t, z)+\epsilon\geq u^*(t, z).$
	\end{center}
	Letting $\epsilon\searrow 0$, we obtain $u(t, z)\geq u^*(t, z)$, and then $u=u_*=u^*$. Hence, $u$ is a viscosity solution
	to \eqref{PMA}. 
	
	Now, we assume that $u_1$ and $u_2$ are two viscosity solutions to \eqref{PMA}. By 
	Theorem \ref{Main Theorem},
	for every $(t, z)\in\Omega_T$ and  $\epsilon>0$, there exists $0<\delta\ll 1$ such that $(t+s, \Phi(s, z))\in\Omega_T$ and
	\begin{center}
		$u_1(t, z)+\epsilon> u_2(t+s, \Phi(s, z)),$
	\end{center}
	for every $|s|<\delta$. Since $u_2$ is continuous, it implies that
	\begin{center}
			$u_1(t, z)+\epsilon\geq \lim\limits_{s\to 0}u_2(t+s, \Phi(s, z))=u_2(t, z).$
	\end{center}
	Letting $\epsilon\searrow 0$, we get $u_1(t, z)\geq u_2(t, z)$. By the same argument, we also have $u_1(t, z)\leq u_2(t, z)$.
	Then $u_1\equiv u_2$.
	
	Thus \eqref{PMA} admits a unique viscosity solution.
\end{proof}
\section*{Acknowledgements}
	The second-named author would like to thank Vingroup Innovation Foundation (VINIF) for supporting his Master studies at 
	VNU University of Science, Hanoi.

\end{document}